\newtheorem{theorem}{Theorem}[section]
\newtheorem{lemma}{Lemma}[section]
\theoremstyle{definition}
\newtheorem{definition}{Definition}
\newcommand{\tail}{\operatorname{Tail}}
\newcommand{\ep}{\varepsilon}
\numberwithin{equation}{section}
\def\R{\mathbb R}
\def\A{\mathcal A}
\def\L{\mathcal L}
\def\0{\boldsymbol 0}
\newcommand{\supp}{\operatorname{supp}}
\newtoks\by
\newtoks\paper
\newtoks\book
\newtoks\jour
\newtoks\yr
\newtoks\pages
\newtoks\vol
\newtoks\publ
\def\name[#1, #2]{#1 #2}
\def\ota{{\hbox{\bf ???}}}
\def\cLear{\by=\ota\paper=\ota\book=\ota\jour=\ota\yr=\ota
\pages=\ota\vol=\ota\publ=\ota}
\def\endpaper{\the\by, \textit{\the\paper},
{\the\jour} \textbf{\the\vol} (\the\yr), \the\pages.\cLear}
\def\endbook{\the\by, \textit{\the\book},
\the\publ, \the\yr.\cLear}
\def\endpap{\the\by, \textit{\the\paper}, \the\jour.\cLear}
\def\endproc{\the\by, \textit{\the\paper}, \the\book, \the\publ,
\the\yr, \the\pages.\cLear}
\begin{document}

\title[]{Local boundedness of solutions to non-local  parabolic equations modeled on the fractional $p-$Laplacian}

%\author[Nystr\"{o}m]{Kaj Nystr\"{o}m}
%\thanks{KN is partially supported by}
%\address{Kaj Nystr\"{o}m\\ Department of Mathematics, Uppsala University\\
%S-751 06 Uppsala, Sweden}
%\email{kaj.nystrom@math.uu.se}

\author[Str{\"o}mqvist]{Martin Str{\"o}mqvist
}
%\thanks{KN is partially supported by}
\address{Martin  Str{\"o}mqvist\\ Department of Mathematics, Uppsala University\\
S-751 06 Uppsala, Sweden}
\email{martin. str{\"o}mqvist@math.uu.se}

%\author{B. Avelin, L. Capogna, G. Citti and K. Nystr{\"o}m}

\maketitle
\begin{abstract}
	\noindent  We state and prove estimates for the local boundedness of subsolutions of non-local, possibly degenerate, parabolic integro-differential equations of the form
\begin{equation*}
	 \partial_tu(x,t)+\mbox{P.V.}\int\limits_{\mathbb R^n}K(x,y,t) |u(x,t)-u(y,t) |^{p-2}(u(x,t)-u(y,t))\, dy,\end{equation*}
$(x,t)\in\mathbb R^n\times\mathbb R$, where $\mbox{P.V.} $ means in the principle value sense,  $p\in (1,\infty)$ and the kernel obeys $K(x,y,t)\approx  |x-y |^{n+ps}$ for some $s\in (0,1)$, uniformly in $(x,y,t)\in\mathbb R^n\times \mathbb R^n\times\mathbb R$.
\medskip
\end{abstract}

\noindent
2000 {\em Mathematics Subject Classification. 30L, 35R03, 35K92 }
\noindent

\medskip

\noindent
{\it Keywords and phrases: quasilinear non-local operators, quasilinear parabolic non-local operators, Cacciopoli estimates Local Boundedness, intrinsic geometry.}

 \setcounter{equation}{0} \setcounter{theorem}{0}

\section{Introduction and statement of main results}
\noindent
In this work we study local regularity properties of solutions to the equation 
\begin{equation}\label{eq1}
\frac{\partial u(x,t)}{\partial t} + Lu(x,t) = 0\quad\text{in }\Omega\times(t_1,t_2),
\end{equation}
for a bounded domain $\Omega$.
In \eqref{eq1}, $L$ is a nonlinear, nonlocal operator of $p$-Laplace type. Specifically, we assume that $L$ is formally given by  
\begin{equation}\label{eqPV}
Lu(x,t) = \text{P.V.}\int_{R^n}|u(x,t)-u(y,t)|^{p-2}(u(x,t)-u(y,t))K(x,y,t)dy,
\end{equation}
where P.V. means principal value and the kernel $K$ satisfies, for some $\Lambda\ge 1$ and $s\in(0,1)$,
\begin{equation}\label{ellipticity}
\frac{\Lambda^{-1}}{|x-y|^{n+sp}} \le K(x,y,t)\le \frac{\Lambda}{|x-y|^{n+sp}}.
\end{equation}
Throughout the paper we will assume that $p\ge 2$, which corresponds to equations that are possibly degenerate. 

Elliptic nonlocal equations of this type $(Lu=0)$ has received great attention in recent years. Ishii and Nakamura \cite{IshiiNak} were the first authors to study this equation, with $K(x,y,t) = (1-s)|x-y|^{n+sp}$ and in a localized setting. They proved existence and uniqueness of viscosity solutions and showed that in this case $L$ converges to the $p$-Laplace operator as $s\to1$. 
In \cite{DKPpmin} Di Castro, Kuusi and Palatucci studied the elliptic counterpart of \eqref{eq1} and proved local boundedness and H\"older continuity of solutions. In \cite{DKPharnack} the same authors proved a very interesting nonlocal version of the Harnack inequality for solutions $u$. It involves the so-called tail of the negative part of $u$ and does not require solutions to be globally positive. 
Through the use of fractional DeGiorgi classes, M. Cozzi \cite{Cozzi} proved the results of \cite{DKPpmin} and \cite{DKPharnack} for solutions to a more general class of equations, involving a term $f(u)$, or solutions to associated minimum problems. 

When it comes to parabolic problems, an analogous theory of local boudedness, H\"older continuity and Harnack's inequality does not exist for $p\neq 2$. 
In the linear case $p=2$, Felsinger and Kassmann \cite{FelKass} prove a weak Harnack inequality and H\"older continuity for weak solutions to \eqref{eq1} that are globally positive. They work with a class of kernels satisfying slightly weaker growth conditions than \eqref{ellipticity}. Due to the assumption of global positivity, the nonlocal term involving the negative part of the solution (the tail term), that normally occur in such estimates, is not present. In \cite{SchwKass}, Schwab and Kassmann prove results similar to those in \cite{FelKass}, but with $a(t,x,y)d\mu(x,y)$ in place of $K(t,x,y)dxdy$, merely assuming that $\mu$ is a measure, not necessarily absolutely continuous w.r.t.\ Lebesgue measure, that satisfies certain growth conditions. It should also be mentioned that the conditions on imposed on the kernels/measures in \cite{FelKass} and \cite{SchwKass} are in general not sufficient to prove a Harnack inequality. This is due to a result by Bogdan and Sztonyk \cite{BogSzton} that prove sharp conditions on the kernel for a Harnack inequaity to hold (in the elliptic setting). 
To the authors best knowledge, there is as of yet no theory of local boundedness for equations of the type \eqref{eq1}, even when $p=2$. 
However, the situation is different if the equation \eqref{eq1} holds globally in space. 
Caffarelli, Chan and Vasseur \cite{CCV} study parabolic nonlocal, nonlinear equations of quadratic growth in all space. They prove that solutions are bounded and H\"older continuous as soon as the initial data is in $L^2$. 

The purpose of this paper is to to develop a basis for further study of the regularity theory of weak solutions to equations of the type \eqref{eq1}. To this end we prove Cacciopollo type inequalities and establish local boundedness of weak subsolutions. In future projects we will study Harnack/H\"older estimates for \eqref{eq1}. 

H\"older estimates and Harnack inequalities for local equations of $p$-Laplace type is considerably more involved in the parabolic setting, compared to the elliptic setting, or to the parabolic setting for $p=2$. This is essentially due to the inherent inhomogeneity of these equations, which leads to intrinsic Harnack/H\"older estimates that are valid only for times depending on the local size of the solution. Harnack's inequality for local equations was proved independently by Kuusi \cite{Kuusi} and DiBenedetto, Gianazza and Vespri \cite{DiBGV}. The results in \cite{Kuusi} were modified and extended to a wider class of operators in \cite{ACCN} by Avelin, Capogna, Citti and Nystr\"om.  For H\"older estimates we refer to \cite{DiB}.

%We say that $u\in L^p(t_1,t_2;W^{s,p}(\R^n))$ is a weak solution to \eqref{eq1} in $B_r(x_0)\times(t_1,t_2)$ if 

Our main result is that local weak solutions to \eqref{eq1} are bounded. The estimates will depend on a nonlocal quantity called the parabolic tail of the solution. If $v\in L^p(t_0-T_0,t_0;W^{s,p}(\R^n))$, the (parabolic) tail of $v$ is defined by 
\[
\tail(v;x_0,r,t_1-T_1,t_1) = \left(\frac{r^{sp}}{T_1}\int_{t_1-T_1}^{t_1}\int_{\R^n\setminus B_r(x_0)}\frac{|v(x,t)|^{p-1}}{|x-x_0|^{n+sp}}dxdt\right)^{\frac{1}{p-1}},
\]
whenever $t_1\le t_0$ and $t_0-T_0\le t_1-T_1$. 
If $Q = B_r(x_0)\times(t_1-T_1,t_1)$, we set 
\[\tail(v;Q)=\tail(v;x_0,r,t_1-T_1,t_1).\]
At times we will use a supremum (in time) version of the tail, given by 
\[
\tail_\infty(v;x_0,r,t_1-T_1,t_1) = \left(r^{sp}\sup_{t_1-T_1<t<t_1}\int_{\R^n\setminus B_r(x_0)}\frac{|v(x,t)|^{p-1}}{|x-x_0|^{n+sp}}dx\right)^{\frac{1}{p-1}}. 
\]
For parabolic rescaling of cubes $Q$, we will use the notation 
\[\lambda Q = B_{\lambda r}(x_0)\times(t_1-\lambda^{sp}T_1,t_1).\]
In all our estimates, $C\ge 1$ will denote a generic constant that depends only on $n$  and $p$ unless otherwise stated. The numerical value of $C$ may change during the course of an estimate. 
We can now state our main theorem. 
\begin{theorem}\label{main_thm}
Let $Q = B_R(x_0)\times(t_0-T_0,t_0)$ and suppose that $u$ is a nonnegative subsolution in $2Q$. %Then for any $\lambda\in (0,1)$, 
Then, if $p>2$ %and $\tail(u_+;Q)\le C\left(\frac{R^{sp}}{T_0}\right)^{\frac{1}{p-2}}$,
\begin{align}\label{supu1}
\sup_{\sigma Q}u &\le \frac{C}{(1-\sigma)^\alpha}\left(\left(\frac{R^{sp}}{T_0}\right)^{\frac{1}{p-2}} + \frac{T_0}{R^{sp}}\tail^{p-1}_\infty(u_+;x_0,\sigma r,t_0-T_0,t_0)\right)\\
&+ \frac{C}{(1-\sigma)^\alpha}\frac{T_0}{R^{sp}}\left(\sup_{t_0-T_0<t<t_0}\fint_{B_R}u(x,t)dx \right)^{p-1},\notag
\end{align}
for any $\sigma\in (0,1)$. 
%If $p=2$, 
%\begin{align}\label{supu1}
%\sup_{\lambda Q}u &\le \frac{C}{(1-\lambda)^\alpha}\left(\tail(u_+;Q) + \frac{T_0}{R^{2s}}\sup_{t_0-T_0<t<t_0}\fint_{B_R}u(x,t)dx \right),\notag
%\end{align}
%for any $\lambda\in (0,1)$.
\end{theorem}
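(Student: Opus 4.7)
The plan is a De Giorgi type level-set iteration adapted to the nonhomogeneous parabolic fractional $p$-Laplacian. The three ingredients are a nonlocal Caccioppoli estimate for the truncations $(u-k)_+$, a parabolic fractional Sobolev embedding that converts the energy estimate into a gain in integrability, and an intrinsic choice of the base level $k$ so that the lack of scaling invariance (for $p>2$) is absorbed into the right-hand side of the iteration.

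\textbf{Caccioppoli step.} I would test the weak formulation against $\varphi=(u-k)_+\eta(x)^p\psi(t)$, where $\eta$ is a smooth spatial cutoff between concentric balls $B_r \Subset B_{r'}$ and $\psi$ is a Lipschitz cutoff in time. Writing $w=(u-k)_+$ and exploiting the standard algebraic inequality
\[
|u(x)-u(y)|^{p-2}(u(x)-u(y))\bigl(w\eta^p(x)-w\eta^p(y)\bigr) \ge c\bigl|w\eta(x)-w\eta(y)\bigr|^p - C\bigl(w(x)^p+w(y)^p\bigr)|\eta(x)-\eta(y)|^p,
\]
then splitting the nonlocal contribution according to $\R^n=B_{r'}\cup(\R^n\setminus B_{r'})$ to isolate a tail piece, I expect an estimate of the form
\[
\sup_\tau\!\int_{B_r}\!w^2\eta^p\,dx + \int_{t_1-T_1}^{t_1}\!\!\iint_{B_{r'}\times B_{r'}}\frac{|w\eta(x)-w\eta(y)|^p}{|x-y|^{n+sp}}\,dx\,dy\,dt \le \text{(derivatives of }\eta,\psi\text{)} + |\{w>0\}|\cdot\tail_\infty(u_+;\ldots)^{p-1}.
\]

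\textbf{Sobolev embedding and iteration.} Fractional Sobolev in space combined with the $L^\infty_t L^2_x$ control from Caccioppoli produces, by parabolic interpolation, a gain in integrability of the form
\[
\iint_{Q_r}w^{p\kappa}\,dx\,dt \le C\Bigl(\sup_\tau\!\int_{B_r}\!w^2\,dx\Bigr)^{\beta}\cdot \iint\frac{|w\eta(x)-w\eta(y)|^p}{|x-y|^{n+sp}}\,dx\,dy\,dt
\]
for some $\kappa>1$ and $\beta>0$. I then pick $k_j=k(1-2^{-j})$, $A_j=\fint_{Q_j}(u-k_j)_+^p\,dx\,dt$ on cylinders $Q_j$ interpolating between $2Q$ and $\sigma Q$, and use Chebyshev on $\{u>k_j\}\cap Q_{j+1}$ to rewrite the combined Caccioppoli/Sobolev bound as a recursion of the shape
\[
A_{j+1}\le C\,b^j\,k^{-\delta}\,(1-\sigma)^{-\alpha}\,A_j^{1+\gamma}
\]
for suitable $\delta,\gamma>0$. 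The standard fast-geometric-convergence lemma then forces $A_\infty=0$ and hence $u\le k$ on $\sigma Q$.

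\textbf{Choice of $k$ and main obstacle.} The ansatz that matches \eqref{supu1} is
\[
k = C(1-\sigma)^{-\alpha}\Bigl[(R^{sp}/T_0)^{1/(p-2)} + (T_0/R^{sp})\,\tail_\infty(u_+;x_0,\sigma R,t_0-T_0,t_0)^{p-1} + (T_0/R^{sp})\bigl(\sup_\tau\!\fint_{B_R}\!u\bigr)^{p-1}\Bigr].
\]
The technical heart of the proof is to calibrate this $k$ correctly. The Caccioppoli inequality supplies a $w^2$ supremum-in-time term whose scaling is mismatched with the $w^p$ Gagliardo term whenever $p>2$, and the intrinsic piece $(R^{sp}/T_0)^{1/(p-2)}$ is precisely what restores dimensional balance in the recursion, while $\tail_\infty^{p-1}$ and the spatial-average term must enter through $k^{p-1}$ so that, after dividing through by $k^{p-1}$, one lands on a clean recursion with $A_0\le 1$. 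Getting this bookkeeping right, while simultaneously controlling the tail contribution in the Caccioppoli step with constants independent of $k$, is what I expect to be the main obstacle; once this is arranged, the remainder is the standard De Giorgi iteration.
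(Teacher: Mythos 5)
Your plan takes a genuinely different route from the paper: you propose a De Giorgi level-set iteration on the truncations $(u-k)_+$, whereas the paper runs a Moser iteration on increasing powers $v^{p_j}$ of the shifted function $v=u+d$, with the intrinsic constant $d=(R^{sp}/T_0)^{1/(p-2)}+\tail_\infty(u_+;\dots)$ built into the Caccioppoli inequality from the start (Lemmas \ref{lem_cacc}--\ref{cor_cacc}), followed by an interpolation step. A De Giorgi scheme of the kind you describe is viable in principle for this equation, and your identification of the scaling mismatch between the $\sup_t\int w^2$ term and the $w^p$ Gagliardo term, resolved by the intrinsic level $(R^{sp}/T_0)^{1/(p-2)}$, is the right diagnosis of where the degeneracy $p>2$ enters.

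There are, however, two genuine gaps. First, the recursion you set up, $A_{j+1}\le C\,b^j k^{-\delta}(1-\sigma)^{-\alpha}A_j^{1+\gamma}$ with $A_j=\fint_{Q_j}(u-k_j)_+^p$, closes only under a smallness condition of the form $k\gtrsim\bigl(\fint_{2Q}u^p\bigr)^{\gamma/\delta}(\dots)$, so it delivers a bound for $\sup_{\sigma Q}u$ in terms of an $L^p$ space--time average of $u$. The theorem instead asserts a bound by $\frac{T_0}{R^{sp}}\bigl(\sup_t\fint_{B_R}u\,dx\bigr)^{p-1}$, an $L^\infty_tL^1_x$ quantity; you cannot simply insert this into your ansatz for $k$, because the iteration does not produce it. Bridging the two requires an additional self-improvement step, which in the paper is the combination of the parabolic Sobolev embedding (Lemma \ref{parabSobolev} with $\kappa=1+s/n$), the absorption Lemma \ref{HL} to convert $\fint_Q v^p$ into $\bigl(\sup_t\fint_{B_R}v\bigr)^p$, and the fact that the Moser lemma is stated with the starting exponent $p-2+\delta$, applied with $\delta=1$ so that the iteration begins from $\fint v^{p-1}$ and H\"older's inequality can be used. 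Your proposal omits this step entirely. Second, the ``calibration of $k$'' that you flag as the main obstacle is precisely where the proof lives: one must verify that the tail term in the Caccioppoli estimate can be absorbed at every step $j$ with constants independent of $j$ and $k$ (in the paper this is exactly the role of choosing $d\ge\tail_\infty$, so that $\tail_\infty^{p-1}v^{\xi}\le d^{p-1}v^{\xi}\le v^{p-1+\xi}=w^p$), and that the exponents $\delta,\gamma$ produced by the interpolation actually balance against $k^{p-2}$ rather than $k^{p}$ or $k^{2}$; as written, this bookkeeping is asserted rather than carried out, so the proposal is an outline of a strategy rather than a proof.
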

We remark that that if $\tail_\infty(u_+;x_0,\sigma r,t_0-T_0,t_0)\le C\left(\frac{R^{sp}}{T_0}\right)^{\frac{1}{p-2}}$, then 
\begin{align*}
&\frac{T_0}{R^{sp}}\tail^{p-1}_\infty(u_+;x_0,\sigma r,t_0-T_0,t_0)\\
&\le \tail_\infty(u_+;x_0,\sigma r,t_0-T_0,t_0) \le \left(\frac{R^{sp}}{T_0}\right)^{\frac{1}{p-2}}.
\end{align*}
Then \eqref{supu1} becomes 
\begin{align}
&\sup_{\sigma Q}u \le \frac{C}{(1-\sigma)^\alpha}\left(\left(\frac{R^{sp}}{T_0}\right)^{\frac{1}{p-2}} + \left(\frac{T_0}{R^{sp}}\sup_{t_0-T_0<t<t_0}\fint_{B_R}u(x,t)dx \right)^{p-1}\right),\notag
\end{align}
This is precisely the estimate that holds for solutions to local equations. 

\subsection{Parabolic Sobolev spaces}

For a domain $D\subset\R^n$, the fractional Sobolev space $W^{s,p}(D)$ consists of all functions $f\in L^p(D)$ such that 
\[
[f]_{W^{s,p}(D)} = \int_D\int_D\frac{|f(x)-f(y)|^p}{|x-y|^{n+sp}}dxdy<\infty. 
\]
The norm of $f\in W^{s,p}(D)$ is given by 
\[
\|f\|_{W^{s,p}(D)} = \|f\|_{L^p(D)} + \left(\int_D\int_D\frac{|f(x)-f(y)|^p}{|x-y|^{n+sp}}dxdy\right)^{\frac1p}.
\]
We shall also need the space 
\[
W_0^{s,p}(\Omega) = \{f\in W^{s,p}(\R^n): f=0\text{ in }\R^n\setminus\Omega\}, 
\]
endowed with the norm $\|\cdot\|_{W^{s,p}(\R^n)}$. 
We will later use the fact that a truncation of $f$ does not increase its norm in $W^{s,p}$: 
\begin{align}
&[f_+]_{W^{s,p}(\Omega)}\le [f]_{W^{s,p}(\Omega)},\label{trunk1}\\
&
[\min\{f,m\}]_{W^{s,p}(\Omega)}\le [f]_{W^{s,p}(\Omega)},\quad\text{for any }m\in\R.\label{trunk2}
\end{align}
To prove \eqref{trunk1} we need only note that $|a_+-b_+|\le |a-b|$ for any $a,b\in\R$. 
Then \eqref{trunk2} is a consequence of \eqref{trunk1} and the fact that $\min\{f,m\} = -(m-f)_++m$. 

For the fractional Sobolev embedding below we refer to \cite{Hitch}. 
\begin{theorem}[Sobolev embedding]\label{sobolev}
Suppose $p\ge 1$, $sp<n$ and let $p*=\frac{np}{n-sp}$. Then 
for any $f\in W^{s,p}(\R^n)$ and $q\in\left[p,p*\right]$,
\begin{equation}\label{sobRn}
\|f\|_{L^q(\R^n)}^p\le \int_{\R^n}\int_{\R^n}\frac{|f(x)-f(y)|^p}{|x-y|^{n+sp}}dxdy.
\end{equation}
If $\Omega$ is an extension domain for $W^{s,p}$, then
\begin{equation}\label{sobOmega}
\|f\|_{L^q(\Omega)}\le C(\Omega)\|f\|_{W^{s,p}(\Omega)}.
\end{equation}
If $sp = n$, then \eqref{sobRn} and \eqref{sobOmega} hold for any $q\in[p,\infty)$. 
\end{theorem}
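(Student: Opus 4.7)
The plan is to reduce everything to the critical-exponent full-space embedding $\|f\|_{L^{p^*}(\R^n)} \le C\,[f]_{W^{s,p}(\R^n)}$ for $f \in C_c^\infty(\R^n)$, where $p^* = np/(n-sp)$. Granted this endpoint inequality and the density of $C_c^\infty$ in $W^{s,p}(\R^n)$, the statement \eqref{sobRn} for intermediate $q\in[p,p^*]$ follows from log-convexity of $L^q$ norms, $\|f\|_{L^q}\le\|f\|_{L^p}^\theta\|f\|_{L^{p^*}}^{1-\theta}$ for the appropriate $\theta$, combined with the trivial bound $\|f\|_{L^p}\le \|f\|_{W^{s,p}}$. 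The domain version \eqref{sobOmega} is then obtained by extending $f$ from $\Omega$ to all of $\R^n$ via the bounded extension operator $E:W^{s,p}(\Omega)\to W^{s,p}(\R^n)$ guaranteed by the extension property, and applying \eqref{sobRn} to $Ef$.

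To prove the endpoint embedding for $f\in C_c^\infty(\R^n)$, I would follow the potential-theoretic approach in \cite{Hitch}. Writing $f_B=\fint_B f$, for any Lebesgue point $x$ and any $R>0$ the telescoping identity
\[f(x) = f_{B_R(x)} + \sum_{k=0}^\infty \bigl(f_{B_{2^{-k-1}R}(x)} - f_{B_{2^{-k}R}(x)}\bigr)\]
compares $f(x)$ to its ball-averages. Each consecutive-average increment is dominated by a double average
\[|f_{B_{r/2}(x)}-f_{B_r(x)}|\;\le\;C\fint_{B_r(x)}\!\fint_{B_r(x)}|f(y)-f(z)|\,dy\,dz,\]
and H\"older's inequality against the Gagliardo kernel $|y-z|^{-(n+sp)}$ converts the right-hand side into $C\,r^{s}\,G_r(x)^{1/p}$, where $G_r(x)$ is the local Gagliardo density
\[G_r(x)=\int_{B_r(x)}\int_{B_r(x)}\frac{|f(y)-f(z)|^p}{|y-z|^{n+sp}}\,dy\,dz.\]
Summing the geometric series in $k$ yields the pointwise bound $|f(x)|\le |f_{B_R(x)}|+C\,R^{s}\,G(x)^{1/p}$; the Hedberg optimization in $R$ together with the $L^p$-boundedness of the Hardy--Littlewood maximal function produces $\|f\|_{L^{p^*}}^{p}\le C\,[f]_{W^{s,p}}^{p}$, and the bounded case \eqref{sobOmega} as well as the limit case $sp=n$ (handled by choosing $s'<s$ slightly smaller with $s'p<n$, bounding $[f]_{W^{s',p}}$ by $[f]_{W^{s,p}}+\|f\|_{L^p}$ through the standard splitting of the Gagliardo integral over $|x-y|\lessgtr 1$, and letting $p^*(s')\nearrow \infty$) follow as indicated above.

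The main obstacle is the endpoint $p=1$, where the Hardy--Littlewood maximal function is unbounded on $L^1$ and the Hedberg argument just described fails. This case is dealt with in \cite{Hitch} by a direct layer-cake/isoperimetric argument tailored to $W^{s,1}$; alternatively, one can reduce the case $p>1$ directly to $p=1$ by applying the $W^{s,1}\hookrightarrow L^{n/(n-s)}$ embedding to a suitable power $|f|^{\gamma}$ and invoking the chain-rule estimate $[\,|f|^{\gamma}\,]_{W^{s,1}}\le C\,\|f\|_{L^{(\gamma-1)p/(p-1)}}^{\gamma-1}[f]_{W^{s,p}}$. Organizing these technical reductions so that the exponents close up correctly is the delicate part of the argument, but all such details are carried out in \cite{Hitch} and may be quoted.
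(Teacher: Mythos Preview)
The paper does not supply its own proof of this theorem; it merely states the result and refers the reader to \cite{Hitch}. Your proposal therefore goes well beyond what the paper does, and is in spirit a sketch of the argument one finds in that reference (telescoping ball averages, pointwise Hedberg-type bound, maximal function, interpolation, extension operator, and the standard reductions for $sp=n$ and $p=1$). As a citation-level justification that is entirely adequate.

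One small point worth flagging: your interpolation step for intermediate $q$ uses the ``trivial bound $\|f\|_{L^p}\le\|f\|_{W^{s,p}}$'', which involves the full $W^{s,p}$ norm, whereas \eqref{sobRn} as stated has only the Gagliardo seminorm on the right-hand side. In fact the seminorm bound cannot hold for all $q\in[p,p^*]$ simultaneously, by scaling: replacing $f$ by $f(\lambda\,\cdot)$ shows that $\|f\|_{L^q}/[f]_{W^{s,p}}$ scales like $\lambda^{\,n/p^*-n/q}$, which blows up as $\lambda\to 0$ whenever $q<p^*$. So the inequality \eqref{sobRn} as written is only correct (up to a constant) at the endpoint $q=p^*$; for $q<p^*$ one must allow the full norm on the right, as in \eqref{sobOmega}. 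Your argument actually delivers the correct version, so the discrepancy lies in the statement rather than in your reasoning.
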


\begin{lemma}\label{parabSobolev}
Suppose $p\ge 1$, $sp<n$ and let $\kappa^* = \frac{n}{n-sp}$ and suppose that 
\[f\in L^p(t_1,t_2;W^{s,p}_0(B_r)).\] 
Then for any $\kappa\in[1,\kappa^*]$,
\begin{align}
&\int_{t_1}^{t_2}\fint_{B_r}|f|^{\kappa p}dxdt\\
&\le Cr^{sp-n}\int_{t_1}^{t_2}[f(\cdot, t)]_{W^{s,p}(B_r)}^pdt
%\int_{t_1}^{t_2}\int_{B_r}\int_{B_r}\frac{|f(x)-f(y)|^p}{|x-y|^{n+sp}}dxdydt\notag\\
\times\left(\sup_{t_1<t<t_2}\fint_{B_r}|f|^{\frac{p\kappa^*(\kappa-1)}{\kappa^*-1}}dx\right)^{\frac{\kappa^*-1}{\kappa^*}}.\notag
\end{align}
\end{lemma}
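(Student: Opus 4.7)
The plan is to interpolate the spatial $L^{\kappa p}$-norm between the Sobolev exponent $L^{p\kappa^*}$, where Theorem \ref{sobolev} provides control by the Gagliardo seminorm, and a higher $L^q$-norm in space that will be taken outside the $t$-integral as a supremum. This is the fractional $p$-Laplace analogue of the classical parabolic Sobolev/Gagliardo--Nirenberg inequality underlying De Giorgi iteration.

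For each fixed $t\in(t_1,t_2)$, I would apply H\"older's inequality with conjugate exponents $\kappa^*$ and $\kappa^*/(\kappa^*-1)$ to split
\[
\int_{B_r}|f|^{\kappa p}\,dx=\int_{B_r}|f|^p\,|f|^{(\kappa-1)p}\,dx\le\left(\int_{B_r}|f|^{p\kappa^*}\,dx\right)^{\frac{1}{\kappa^*}}\left(\int_{B_r}|f|^{\frac{p\kappa^*(\kappa-1)}{\kappa^*-1}}\,dx\right)^{\frac{\kappa^*-1}{\kappa^*}}.
\]
Note that this uses only $\kappa\in[1,\kappa^*]$ for the exponents to be admissible. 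To the first factor I apply the fractional Sobolev embedding \eqref{sobRn}: since $f(\cdot,t)\in W_0^{s,p}(B_r)$, extending by zero and invoking the inequality on $\R^n$ yields $\big(\int_{B_r}|f(\cdot,t)|^{p\kappa^*}\big)^{1/\kappa^*}\le C[f(\cdot,t)]^p_{W^{s,p}(B_r)}$.

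Then I would integrate in $t$, factoring the $t$-supremum out of the higher $L^q$-in-space factor (which is raised only to the power $(\kappa^*-1)/\kappa^*$):
\[
\int_{t_1}^{t_2}\int_{B_r}|f|^{\kappa p}\,dx\,dt\le C\left(\sup_{t_1<t<t_2}\int_{B_r}|f|^{\frac{p\kappa^*(\kappa-1)}{\kappa^*-1}}\,dx\right)^{\frac{\kappa^*-1}{\kappa^*}}\int_{t_1}^{t_2}[f(\cdot,t)]_{W^{s,p}(B_r)}^p\,dt.
\]
Dividing by $|B_r|\sim r^n$ converts the spatial integral on the left to the average $\fint_{B_r}$; inside the supremum I similarly rewrite $\int_{B_r}=|B_r|\fint_{B_r}$. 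The combined power of $|B_r|$ on the right is $-1+(\kappa^*-1)/\kappa^*=-1/\kappa^*$, producing the factor $r^{-n/\kappa^*}=r^{sp-n}$, since $\kappa^*=n/(n-sp)$. This gives exactly the claimed inequality.

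The delicate step is the Sobolev inequality on $B_r$: Theorem \ref{sobolev} supplies $\|f\|_{L^{p\kappa^*}(\R^n)}^p\le C[f]^p_{W^{s,p}(\R^n)}$ for the zero extension, but one must check that the full-space Gagliardo seminorm of this extension is controlled by $[f]^p_{W^{s,p}(B_r)}$. This reduces, via the pointwise bound $\int_{\R^n\setminus B_r}|x-y|^{-n-sp}\,dy\le C\operatorname{dist}(x,\partial B_r)^{-sp}$, to a fractional Hardy inequality for functions in $W_0^{s,p}(B_r)$, which absorbs the cross term $\int_{B_r}|f(x)|^p\operatorname{dist}(x,\partial B_r)^{-sp}dx$ back into $[f]^p_{W^{s,p}(B_r)}$. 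Once this Sobolev-on-$B_r$ estimate is granted, the rest is H\"older and routine bookkeeping of the scaling exponents.
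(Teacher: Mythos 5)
Your H\"older-plus-Sobolev interpolation and the bookkeeping of the powers of $|B_r|$ reproduce the paper's argument exactly; the paper simply cites the classical proof in \cite{DiB} and adds the remark that the zero extension of $f(\cdot,t)$ lies in $W^{s,p}(\R^n)$, so that \eqref{sobRn} applies. The step you single out as delicate is indeed the crux, but your proposed repair does not cover the whole parameter range, and in fact the lemma as literally stated cannot be proved because it is false when $sp<1$: take $f\equiv\chi_{B_r}$, independent of $t$. For $sp<1$ this function belongs to $W^{s,p}(\R^n)$ and vanishes outside $B_r$, hence $f\in L^p(t_1,t_2;W_0^{s,p}(B_r))$ in the sense used in the paper, yet $[f(\cdot,t)]_{W^{s,p}(B_r)}=0$ while the left-hand side equals $t_2-t_1>0$. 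Correspondingly, the regional fractional Hardy inequality $\int_{B_r}|f(x)|^p\operatorname{dist}(x,\partial B_r)^{-sp}dx\le C[f]^p_{W^{s,p}(B_r)}$ that you invoke to absorb the cross term holds for $W_0^{s,p}(B_r)$ only when $sp>1$ (this is Dyda's theorem); for $sp\le 1$ it fails, which is precisely why the counterexample exists. So your argument is complete for $sp>1$ but breaks down for $sp\le 1$, which is admissible under the hypotheses.

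The correct statement --- and what applying \eqref{sobRn} to the zero extension actually yields --- carries on the right-hand side the seminorm of the extension, namely $[f(\cdot,t)]^p_{W^{s,p}(B_r)}+2\int_{B_r}\int_{\R^n\setminus B_r}|f(x,t)|^p|x-y|^{-n-sp}\,dy\,dx$; you should stop there rather than try to force the cross term back into the local seminorm. This costs nothing in the applications: in the Moser iteration the lemma is applied to $f=w\eta_j$ with $\operatorname{dist}(\supp\eta_j,\R^n\setminus B_j)\ge\delta_j/2$, so the cross term is at most $C\delta_j^{-sp}\int_{B_j}|w\eta_j|^p\,dx$, which is exactly the $I_{12}$-type term already controlled on the right of the Caccioppoli estimate. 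Apart from this point --- which the paper's own one-line proof also glosses over, and which you at least identified --- your proof coincides with the paper's.
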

\begin{proof}
The proof is identical to the proof of the analogous statement for the space $L^p(t_1,t_2;W^{1,p}_0(B_r))$, c.f. \cite{DiB}. We need only note that for a function $g\in W^{s,p}_0(B_r)$, its extension by zero to $\R^n$ belongs to $W^{s,p}(\R^n)$ and we are at liberty to apply \eqref{sobRn}.   
\end{proof}

\subsection{Weak Solutions}
We are now in a position to define weak solutions, and will show that for any bounded domain $\Omega\subset \R^n$ and $T>0$, the problem  
\begin{equation}\label{eqdata}
\left\{\begin{aligned}
&\frac{\partial u(x,t)}{\partial t} + Lu(x,t) = 0,\quad\text{in }\Omega_{T} = \Omega\times(0,T),\\
&u(x,t) = g(x,t),\quad\text{in }(\R^n\setminus\Omega)\times(0,T),\\
&u(x,0) = u_0(x),\quad\text{in }\R^n,
\end{aligned}\right.
\end{equation}
has a unique solution in a suitable sense, whenever $g$ and $u_0$ belong to appropriate function spaces. 
%If $g$ and $u_0$ are bounded, then so is $u$ by the maximum principle. Hence, at least qualitatively, we may assume that $u$ is bounded. 
Motivated by \eqref{eq1} and \eqref{eqPV}, we define a weak solution as follows. For the sake of brevity we will use the notation 
\begin{align*}
&\A u(x,y,t) = K(x,y,t)|u(x,t)-u(y,t)|^{p-2}(u(x,t)-u(y,t)),\\
&\delta u(x,y,t) = u(x,t)-u(y,t)\\
&d\mu = d\mu(x,y,t) = K(x,y,t)dxdydt.
\end{align*}

\begin{definition}
Suppose 
\begin{align*}
&g\in L^p(0,T;W^{s,p}(\R^n)),\\
&\partial_tg\in L^{p'}(0,T;(W^{s,p}(\R^n))^*),\\ 
&u_0\in L^2(\Omega). 
\end{align*}
We say that 
 %$u\in C((0,T);W^{s,p}(\Omega))\cap$ 
$u\in L^p((0,T);W^{s,p}(\R))$ is a \emph{weak solution} to \eqref{eqdata} if 
\begin{align*}&\partial_tu\in L^{p'}(0,T;(W^{s,p}(\R^n))^*),\\
&u-g\in L^p(0,T;W_0^{s,p}(\Omega
))
\end{align*}
and %$\cap C^1((0,T);W^{s,p}(\Omega))$,
\begin{align}\label{weak1}
&\int_{t_1}^{t_2}\int_{\R^n}\int_{\R^n}\A u(x,y,t)(\eta(x,t)-\eta(y,t))dxdydt -\int_{t_1}^{t_2}\int_{R^n}u\partial_t\eta dxdt\\
%& = \int_{t_1}^{t_2}\int_{\R^n\setminus\Omega}\int_{\R^n\setminus\Omega}\A g(x,y,t)(\eta(x,t)-\eta(y,t))dxdydt -\int_{t_1}^{t_2}\int_{\R^n\setminus\Omega}g\partial_t\eta dxdt\notag\\
&=\int_{\Omega}u_0\eta(0,x) dx,\notag
\end{align}\label{weak2}
for any $\eta\in L^p((0,T);W_0^{s,p}(\Omega))$ such that 
\[
\partial_t\eta\in L^{p'}(0,T;(W^{s,p}(\R^n))^*) \quad\text{and}\quad\eta(x,t_2) = 0.
\]
Let $w= u-g$. Then $u$ solves \eqref{weak1} if and only if $w\in L^p(0,T;W^{s,p}_0(\Omega))$ solves 
\begin{align}\label{weak10}
&\int_{t_1}^{t_2}\int_{\R^n}\int_{\R^n}\A (w+g)(x,y,t)(\eta(x,t)-\eta(y,t))dxdydt\\
&-\int_{t_1}^{t_2}\int_{R^n}w\partial_t\eta dxdt = 
\int_{t_1}^{t_2}\int_{R^n}g\partial_t\eta dxdt + \int_{\Omega}u_0\eta(0,x) dx,\notag
\end{align}\label{weak2}
for any $\eta\in L^p((0,T);W_0^{s,p}(\Omega))$ such that 
\[
\partial_t\eta\in L^{p'}(0,T;(W^{s,p}(\R^n))^*) \quad\text{and}\quad\eta(x,t_2) = 0.
\]

\end{definition}

\subsubsection{Wellposedness}
The existence and uniqueness of a solution to \eqref{weak10} is a consequence of the general theory for degenerate parabolic equations in Banach spaces, see \cite{Show}. We will only briefly explain the properties of the equation that need to be verified. Let $\tilde\A(\cdot) = \A(\cdot+g)$. 
Suppose $u(\cdot,t)$ and $v(\cdot,t)$ belong to $W_0^{s,p}(\Omega)$. Then by H\"older's inequality and \eqref{ellipticity}, 
\begin{align}\label{opL}
&\int_{\R^n}\int_{\R^n}\tilde\A u(x,y,t)(v(x,t)-v(y,t))dxdydt \\
&\le \Lambda[u(\cdot,t)+g(\cdot,t)]_{W^{s,p}}^{p-1}[v(\cdot,t)]_{W^{s,p}}\notag\\
&\le \Lambda 2^{p-1}\|u\|_{W^{s,p}}^{p-1}\|v\|_{W^{s,p}} + \Lambda 2^{p-1}\|g\|_{W^{s,p}}^{p-1}\|v\|_{W^{s,p}}.\notag 
\end{align}
Thus $\tilde \A$ defines an operator $\mathcal{L}_t:W^{s,p}(\R^n)\mapsto (W^{s,p}(\R^n))^*$, with $\langle \mathcal{L}_t u,v\rangle$ given by \eqref{opL} and 
\begin{equation}\label{Lbound}
\|\mathcal{L}_t u\| \le 2^{p-1}\Lambda \|u(\cdot,t)\|_{W^{s,p}}^{p-1} + 2^{p-1}\Lambda \|g(\cdot,t)\|_{W^{s,p}}^{p-1}. 
\end{equation}
Additionally, $\L_t$ is a monotone operator, i.e.\ 
\[
\langle \mathcal{L}_t u-\mathcal{L}_tv,u-v\rangle\ge 0,\quad\text{for all }u,v\in W^{s,p}(\R^n).  
\]
Indeed, 
\begin{align*}
\langle \mathcal{L}_t u-\mathcal{L}_tv,u-v\rangle& = \langle \mathcal{L}_t u-\mathcal{L}_tv,u+g-(v+g)\rangle\\
&=\int_{\R^n}\int_{\R^n}|\delta(u+g)|^pd\mu + \int_{\R^n}\int_{\R^n}|\delta(v+g)|^pd\mu \\
&- \int_{\R^n}\int_{\R^n}|\delta(u+g)|^{p-2}\delta(u+g)\delta(v+g)d\mu\\
&- \int_{\R^n}\int_{\R^n}|\delta(v+g)|^{p-2}\delta(v+g)\delta(u+g)d\mu\\
& \ge \int_{\R^n}\int_{\R^n}|\delta(u+g)|^pd\mu + \int_{\R^n}\int_{\R^n}|\delta(v+g)|^pd\mu \\
&-\frac{p-1}{p}\int_{\R^n}\int_{\R^n}|\delta(u+g)|^pd\mu - \frac{1}{p}\int_{\R^n}\int_{\R^n}|\delta(v+g)|^pd\mu\\
&-\frac{p-1}{p}\int_{\R^n}\int_{\R^n}|\delta(v+g)|^pd\mu - \frac{1}{p}\int_{\R^n}\int_{\R^n}|\delta(u+g)|^pd\mu=0, 
\end{align*}
where we used Young's inequality. 
The existence of a unique weak solution now follows from Proposition 4.1. in \cite{Show} if, in addition to \eqref{Lbound} and the monotonicity, we prove that  
\begin{align}
&[u]_{s,p}\ge \alpha\|u\|_{W^{s,p}(\R^n)},\label{seminormisnorm}\\
& \langle \mathcal{L}_t u,u\rangle \ge \alpha[u]_{W^{s,p}}^p - C[g]_{W^{s,p}}^p,\text{ for some }\alpha>0.\label{coerc}  
\end{align}
The Sobolev inequality guarantees that \eqref{seminormisnorm} holds. 
Let us prove \eqref{coerc}. By Young's inequality with $\ep$ and \eqref{ellipticity}, 
\begin{align*}
&\langle \mathcal{L}_t u,u\rangle = \langle \mathcal{L}_t u,u+g-g\rangle\\
& = \int_{\R^n}\int_{\R^n}|\delta (u+g)|^pd\mu - \int_{\R^n}\int_{\R^n}|\delta (u+g)|^{p-2}\delta (u+g)\delta gd\mu\\
&\ge\Lambda^{-1}[u+g]_{W^{s,p}}^p - \ep\Lambda [u+g]_{W^{s,p}}^p
-C(\ep)\Lambda [g]_{W^{s,p}}^p
\end{align*}
Choosing $\ep = \frac{1}{2\Lambda^2}$, we obtain 
\begin{align*}
\langle \mathcal{L}_t u,u\rangle & \ge\frac{1}{2\Lambda}[u+g]_{W^{s,p}}^p
 -C\Lambda[g]_{W^{s,p}}^p \\
 & \ge \frac{1}{2^{p+1}\Lambda}[u]_{W^{s,p}}^p - \left(\frac{1}{2\Lambda} + C\Lambda\right)[g]_{W^{s,p}}^p, 
\end{align*}
from which \eqref{coerc} follows. 
The initial data $u_0$ is assumed in the sense that 
\[
\lim_{t\to0}\int_{\Omega}|u(x,t)-u_0|^2dx=0. 
\]
The reason for choosing $u_0\in L^2(\Omega)$ is that $u_0$ needs to be an element of a Hilbert space $H$ such that $W^{s,p}_0(\Omega)$ is dense and continuously embedded into $H$. This is indeed true because of the sobolev embedding theorem and the fact that $C_c^\infty(\Omega)$ is dense in $W_0^{s,p}(\Omega)$.

%If in \eqref{weak1}, we choose a test function $v$ that has compact support in $\Omega\times(t_1,t_2)$, then 
%\begin{align}\label{weak2}
%&\int_{t_1}^{t_2}\int_{\R^n}\int_{\R^n}\A u(x,y,t)(v(x,t)-v(y,t))dxdydt\\
%&-\int_{t_1}^{t_2}\int_{R^n}u\partial_t v dxdt = 0, \notag
%\end{align}
%for any $v\in L^p((0,T);W_0^{s,p}(\Omega))$ such that 
%\[
%\partial_t\eta\in L^{p'}(0,T;(W^{s,p}(\R^n))^*), 
%\]
%and $\supp v\subset\Omega\times(t_1,t_2)$.  

\section{Estimates for subsolutions}
\noindent
%Throughout the rest of the paper we will assume that $\Omega$ is a bounded open set in $\R^n$ and set $K=\bar\Omega$.  Unless otherwise stated we let $C\geq 1$ denote a constant depending only on $n,p$, not necessarily the same at each occurrence.
%In this section we assume that $(\M,\mu,d)$ be a $p$-admissible structure for some $p\ge 1$, in the sense of Definition \ref{admissible}. Throughout the section we will assume that $\Omega$ is a bounded open set in $\M$ and set $K=\bar\Omega$. The constants $C_D, C_P, N$ and $R$ in Definition \ref{admissible} will all depend on $K$. In this section unless otherwise stated we let $C\geq 1$ denote a constant depending only on $C_D,C_P,p$, not necessarily the same at each occurrence.
%
\begin{definition}
We say that $u$ is a solution to $\partial_t u + Lu = 0$ in $\Omega\times(t_1,t_2)$ if 
\begin{align}\label{fullsol}
&\int_{t_1}^{t_2}\int_{\R^n}\int_{\R^n}\A u(x,y,t)(\eta(x,t)-\eta(y,t))dxdydt -\int_{t_1}^{t_2}\int_{R^n}u\partial_t\eta dxdt = 0,
\end{align}
for all $\eta\in L^p(t_1,t_2;W^{s,p}_0(\Omega))$ such that $\partial_t\eta\in L^{p'}(t_1,t_2;(W^{s,p}(\Omega))^*)$ and $\eta(x,t_1)=\eta(x,t_2)$, for all $x\in\Omega$. 
\end{definition}
\begin{definition}
We say that $u$ is a subsolution to $\partial_t u + Lu = 0$ in $\Omega\times(t_1,t_2)$ if 
\begin{align}\label{subsol}
&\int_{t_1}^{t_2}\int_{\R^n}\int_{\R^n}\A u(x,y,t)(\eta(x,t)-\eta(y,t))dxdydt -\int_{t_1}^{t_2}\int_{R^n}u\partial_t\eta dxdt \le 0,
\end{align}
for all $\eta$ as in the definition of a solution that are also non negative. 
\end{definition}
We first prove that if $u$ is a subsolution, then its positive part, $u_+ = \max\{u,0\}$, is again a subsolution. 
\begin{lemma}\label{trunksubsol}
If $u$ is a subsolution to \eqref{weak1}, then $u_+$ is also a subsolution. 

\end{lemma}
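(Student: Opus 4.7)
The plan is to verify the subsolution inequality for $u_+$ against an arbitrary non-negative admissible test function $\eta$ by plugging a suitably modified version of $\eta$ into the subsolution inequality that $u$ already satisfies, and then passing to a limit. Fix a smooth non-decreasing cutoff $\phi_\ep:\R\to[0,1]$ with $\phi_\ep\equiv 0$ on $(-\infty,0]$ and $\phi_\ep\equiv 1$ on $[\ep,\infty)$, and test \eqref{subsol} for $u$ with $\eta\,\phi_\ep(u)$, which is non-negative and admissible because $\phi_\ep$ is Lipschitz (so $\phi_\ep(u)$ lies in the right fractional Sobolev class whenever $u$ does) and because multiplying by $\eta$ preserves support in $\Omega$ and vanishing at $t_2$.

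For the nonlocal term I would split $\R^n\times\R^n$ into the four regions determined by the signs of $u(x,t)$ and $u(y,t)$. Where both values are positive, $u$ and $u_+$ coincide and the integrand is unchanged. Where both are non-positive, $\phi_\ep(u(x))=\phi_\ep(u(y))=0$ and $\A u_+\equiv 0$, so both sides contribute zero. In the mixed region $u(x)>0\ge u(y)$, as $\ep\to 0$ we have $\phi_\ep(u(x))\to 1$ and $\phi_\ep(u(y))\to 0$, and the pointwise inequality $(u(x)-u(y))^{p-1}\ge u(x)^{p-1}$ for $p\ge 2$ gives
\[
\A u(x,y,t)\bigl[\eta(x)\phi_\ep(u(x))-\eta(y)\phi_\ep(u(y))\bigr]\;\longrightarrow\;\A u(x,y,t)\,\eta(x)\;\ge\;\A u_+(x,y,t)\bigl[\eta(x)-\eta(y)\bigr],
\]
with the analogous symmetric bound in the region $u(y)>0\ge u(x)$. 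Summing the four regions, the limit of the nonlocal part tested with $u$ dominates the desired nonlocal part for $u_+$ tested with $\eta$.

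For the time derivative, compute $\partial_t(\eta\phi_\ep(u))=\phi_\ep(u)\partial_t\eta+\eta\phi_\ep'(u)\partial_t u$ and use the identity $u\phi_\ep'(u)\partial_t u=\partial_t\Phi_\ep(u)$ with $\Phi_\ep(s)=\int_0^s r\phi_\ep'(r)\,dr$. An integration by parts in $t$ transfers the dangerous $\partial_t u$ onto $\partial_t\eta$, and since $|\Phi_\ep|\le\ep$ pointwise, this term vanishes as $\ep\to 0$; the surviving piece converges to $\int u\,\chi_{\{u>0\}}\,\partial_t\eta=\int u_+\partial_t\eta$, which is exactly what appears in the definition of subsolution for $u_+$.

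The main obstacle is that $\partial_t u$ belongs only to the negative dual $L^{p'}(t_1,t_2;(W^{s,p}(\R^n))^*)$, so the pointwise manipulation $\phi_\ep(u)\partial_t u$ is not \emph{a priori} justified. To get around this I would first pass to a Steklov time average of \eqref{subsol} (or, equivalently, mollify $u$ in $t$), carry out the algebraic work at the regularized level where $\partial_t u$ is a genuine $L^{p'}$ function, and only then send the Steklov parameter and $\ep$ to zero, invoking dominated convergence with majorants built from \eqref{ellipticity} and the assumed integrability of $u$ to justify both limits.
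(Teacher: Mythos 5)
Your argument is essentially the paper's: the paper tests with $\phi_j'(u)\,v$, where $\phi_j$ is a smooth convex approximation of $\tau_+$, so $\phi_j'$ is precisely your non-decreasing cutoff (with transition zone just below $0$ instead of just above); the time term is handled by the same chain-rule device ($\partial_t u\,\phi_j'(u)=\partial_t\phi_j(u)$ in the paper versus your $u\,\phi_\ep'(u)\,\partial_t u=\partial_t\Phi_\ep(u)$, with the $O(\ep)$ remainder discarded); and the nonlocal term is treated by the same sign/case analysis leading to a pointwise limit inequality against $\A u_+\,\delta\eta$. The Steklov/mollification step you defer to is also how the paper makes sense of $\partial_t u$.

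One step would fail as literally stated: there is no $\ep$-uniform integrable \emph{majorant} for the nonlocal integrand, so dominated convergence is not the right tool there. Writing
\begin{equation*}
\eta(x)\phi_\ep(u(x))-\eta(y)\phi_\ep(u(y))=\tfrac12\bigl(\phi_\ep(u(x))-\phi_\ep(u(y))\bigr)\bigl(\eta(x)+\eta(y)\bigr)+\tfrac12\bigl(\eta(x)-\eta(y)\bigr)\bigl(\phi_\ep(u(x))+\phi_\ep(u(y))\bigr),
\end{equation*}
the first product times $\A u$ is non-negative (monotonicity of $\phi_\ep$) but near the diagonal is only bounded by $C\ep^{-1}|u(x)-u(y)|^{p}|x-y|^{-n-sp}\,\eta(x)$, which degenerates as $\ep\to0$; the second is bounded in absolute value by $|\A u|\,|\eta(x)-\eta(y)|$, which is in $L^1$ by H\"older and \eqref{ellipticity}. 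So what you actually have is a uniform integrable \emph{minorant} $-|\A u|\,|\delta\eta|$, and Fatou's lemma then gives $\liminf_{\ep\to0}\int\A u\,\delta(\eta\phi_\ep(u))\,d\mu\ge\int\A u_+\,\delta\eta\,d\mu$ — a one-sided bound, which is all the subsolution inequality requires. This is exactly the paper's move when it observes that its pointwise lower bounds \eqref{phi(x)}--\eqref{phi(y)} are in $L^1$ and passes to $\liminf_j I_{2,j}$. (A cosmetic point: your inequality $(u(x)-u(y))^{p-1}\ge u(x)^{p-1}$ for $u(y)\le 0<u(x)$ holds for every $p>1$, not only $p\ge2$.)
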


\begin{proof}
Let $\phi_j(\tau)$ be a smooth, convex approximation of $\tau_+$ such that $\phi_j(\tau)=0$ if $\tau\le -1/j$,  $\phi_j(\tau),\;\phi_j'(\tau)>0$ if $\tau>-1/j$ and $|\phi_j'|\le C$, $|\phi_j''|\le C(j)$. Let $v$ be a non negative, bounded test function. Then $\phi_j'(u)v$ is an admissible test function, as can be easily seen from the following equality
\begin{align*}
&\phi_j'(u(x,t))v(x,t) - \phi_j'(u(y,t))v(y,t) \\
&= \frac12(\phi_j'(u(x,t))-\phi_j'(u(y,t)))(v(x,t)+v(y,t))\\
& + \frac12(v(x,t)-v(y,t))(\phi_j'(u(x,t))+\phi_j'(u(y,t))). 
\end{align*}
Let $\phi_j(x,t) = \phi_j(u(x,t))$ and let $\phi_j'(x,t) = \phi_j'(u(x,t))$. We also set 
\begin{equation}
u_{j,+}(x,t) = \max\{u(x,t),-1/j\} = 
\left\{\begin{array}{l}
u(x,t)\text{ if }\phi_j'(x,t)>0,\\
-1/j\text{ if }\phi_j'(x,t)=0.
\end{array}\right.
\end{equation}
Using $\phi_j'(u)v$ as a test function in \eqref{subsol} we obtain
\begin{align*}
&\int_{t_1}^{t_2}\int_{\Omega}\partial_tu\phi_j'(u)vdxdt + \int_{t_1}^{t_2}\int_{\R^n}\int_{\R^n}\A u(x,y,t)\delta(\phi_j'(u)v)(x,y,t)dxdydt\\
& = I_{1,j}+I_{2,j} \le 0. 
\end{align*}
We may write $I_{1,j}$ as 
\begin{equation}\label{timeuplus}
I_{1,j} = \int_{t_1}^{t_2}\int_{\Omega}v\partial_t\phi_j(u)dxdt\to I_{1} = \int_{t_1}^{t_2}\int_{\Omega}v\partial_t u_+dxdt, \quad\text{as }j\to\infty.
\end{equation}
We next estimate the integrand of $I_{2,j}$ under the assumption that $u(x,t)>u(y,t)$. 
If $\phi_j'(x,t)=0$, then $\A u(x,y,t)\delta(\phi_j'(u)v)(x,y,t) = 0$ since $\phi'$ is monotone non decreasing. 
If $\phi_j'(y,t)>0$, then 
\begin{align*}
&(u(x,t)-u(y,t))^{p-1}(\phi_j'(x,t)v(x,t)-\phi_j'(y,t)v(y,t))\\
& =(u_{j,+}(x,t)-u_{j,+}(y,t))^{p-1}(\phi_j'(x,t)v(x,t)-\phi_j'(y,t)v(y,t))\\
&\ge(u_{j,+}(x,t)-u_{j,+}(y,t))^{p-1}\phi_j'(x,t)(v(x,t)-v(y,t)).
\end{align*}
If $\phi_j'(y,t)=0$ and $\phi_j'(x,t)>0$, then 
\begin{align*}
& (u(x,t)-u(y,t))^{p-1}(\phi_j'(x,t)v(x,t)-\phi_j'(y,t)v(y,t))\\
& = (u(x,t)-u(y,t))^{p-1}\phi_j'(x,t)v(x,t)\\
&\ge (u_{j,+}(x,t)-u_{j,+}(y,t))^{p-1}\phi_j'(x,t)v(x,t)\\
&\ge (u_{j,+}(x,t)-u_{j,+}(y,t))^{p-1}\phi_j'(x,t)(v(x,t)-v(y,t)). 
\end{align*}
We have thus shown that if $u(x,t)>u(y,t)$, 
\begin{align}
&\A u(x,y,t)\delta(\phi_j'(u)v)(x,y,t)\label{phi(x)}\\
&\ge K(x,y,t)(u_{j,+}(x,t)-u_{j,+}(y,t))^{p-1}\phi_j'(x,t)(v(x,t)-v(y,t)).\notag
\end{align}
By interchanging the roles of $x$ and $y$, we obtain, for $u(x,t)<u(y,t)$, the analogous estimate
\begin{align}
&\A u(x,y,t)\delta(\phi_j'(u)v)(x,y,t)\label{phi(y)}\\
&\ge K(x,y,t)(u_{j,+}(y,t)-u_{j,+}(x,t))^{p-1}\phi_j'(y,t)(v(y,t)-v(x,t))\notag\\
& = K(x,y,t)|u_{j,+}(x,t)-u_{j,+}(y,t)|^{p-2}\notag\\
&\quad\times(u_{j,+}(x,t)-u_{j,+}(y,t))\phi_j'(y,t)(v(x,t)-v(y,t)).\notag
\end{align}
Since the expressions in \eqref{phi(x)} and \eqref{phi(y)} are $L^1(\R^n\times\R^n\times(t_1,t_2))$, we obtain 
\begin{align*}
&\liminf_{j\to\infty}I_{2,j}\\
&\ge \int_{t_1}^{t_2}\int_{\R^n}\int_{\R^n}K(x,y,t)(u_{+}(x,t)-u_{+}(y,t))^{p-1}(v(x,t)-v(y,t))dxdydt\\
& = \int_{t_1}^{t_2}\int_{\R^n}\int_{\R^n}\A u_+(x,y,t)\delta v(x,y,t)dxdydt.
\end{align*}
In combination with \eqref{timeuplus}, this gives 
\[
\int_{t_1}^{t_2}\int_{\Omega}v\partial_t u_+dxdt + \int_{t_1}^{t_2}\int_{\R^n}\int_{\R^n}\A u_+(x,y,t)\delta v(x,y,t)dxdydt\le 0,
\]
for all bounded, non negative test functions $v$, and by a standard approximation argument, all non negative test functions $v$.

\end{proof}

\subsection{Caccioppoli estimate} Let $\zeta_h(s)$ be a standard mollifier with support in $(-h,h)$. Given $f : \R^n \times \R \to \R$, we define
\begin{equation}\label{mollify}
	f_h(x,t) = \int_{\R} f(x,s) \zeta_h(t-s) ds.
\end{equation}
\begin{definition}\label{lebesgue-instant}
	Let $\Omega \subset \R^n$ be a domain, $u \in L^p(t_1,t_2;W^{s,p}(\Omega))$, and consider $t_1 < t < t_2$. Then $t$ is called a Lebesgue instant for $u$ if
	\begin{equation*}
		\lim_{h \to 0} \int_{\Omega} |u_h (x,t)-u(x,t)|^2 dx = 0.
	\end{equation*}
\end{definition}
Since $\int_{\Omega} u (x,t) dx$ belongs to $L^p(t_1,t_2)$, it follows from Lebesgue's differentiation theorem that a.e.\ $t\in (t_1,t_2)$ is a Lebesgue instant. 

\begin{lemma} \label{lem_cacc}
	Let $p\in (1,\infty)$, $s\in (0,1)$. Let $\xi \ge 1$ and assume that $K$ satisfies the ellipticity condition \eqref{ellipticity}. Let $x_0\in\mathbb R^n$, $\tau_1<\tau_2$, $B_r:=B_r(x_0)$, and assume that $u $ is a non-negative sub-solution in $B_r\times (\tau_1, \tau_2)$. Let
$t_1,t_2$ be Lebesgue instants for $u$, with $\tau_1<t_1<t_2<\tau_2$. For $d>0$, let $v=u+d$, $w=v^{(p-1+\xi)/p}$. Then
	\begin{align*}
		&\int_{t_1}^{t_2} \int_{B_r} \int_{B_r}  {|w\phi(x,t)-w\phi(y,t)|^{p}}\, d\mu + \frac{1}{\xi+1} \int_{B_r}
v(x,t)^{1+\xi} \phi^p(x,t) dx \bigg |_{t=t_1}^{t_2} \\
&\leq C \int_{t_1}^{t_2} \int_{B_r} \int_{B_r} \max\{w(x,t), w(y,t)\}^p|\phi(x,t)-\phi(y,t)|^p\, d\bar\mu\notag\\
&+C\biggl (\sup_{x\in\supp\psi}\int_{\R^n\setminus B_r}|x-y|^{-(n+ps)}\, dy\biggr)\biggl (\int_{t_1}^{t_2}\int_{B_r}w^p(x,t)\phi^p(x,t)\, dx dt\biggr )\\
&+C\int_{t_1}^{t_2}\left(\sup_{x\in\supp\psi}\int_{\R^n\setminus B_r}\frac{u(y,t)_+^{p-1}}{|x-y|^{n+sp}}dy\int_{B_r}v^\xi\phi^p(x,t)dx\right)dt\\
%cd^{1-p}\biggl (\frac{t_2-t_1}{R^{sp}}\biggr)\tail(u_\mp;x_0,R,t_1,t_2)^{p-1}\biggl (\sup_{{t_1}<t<{t_2}}\int_{B_r}w^p(x,t)\phi^p(x,t)\, dx\biggr )\\
		&+ \frac{1}{(1+\xi)}\int_{t_1}^{t_2} \int_{B_r}v^{1+\xi} \left (  \frac{\partial \phi^p}{\partial t} \right )_{+} dx dt,
	\end{align*}
	for all $\phi(x,t) = \psi(x) \zeta(t)$ with $\zeta \in C_0^{\infty}(\tau_1,\tau_2)$ and $\psi \in C_0^{\infty}(B_r)$.
	\end{lemma}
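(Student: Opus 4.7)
The plan is to test the subsolution inequality \eqref{subsol} with $\eta(x,t) = v(x,t)^\xi \phi(x,t)^p$, which is non-negative and supported in $B_r\times(\tau_1,\tau_2)$; because $u\ge 0$ and $d>0$, this is admissible. Since $\partial_t u$ lives in the dual space rather than pointwise, I first replace $u$ by the time-regularization $u_h$ from \eqref{mollify}, test against $v_h^\xi\phi^p$, and then send $h\to 0$. The assumption that $t_1$ and $t_2$ are Lebesgue instants is precisely what guarantees that the time-boundary values $\int_{B_r}v^{1+\xi}\phi^p\,dx$ at $t=t_i$ pass to the limit cleanly.

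For the parabolic term, writing $\partial_t u\cdot v^\xi = \frac{1}{1+\xi}\partial_t v^{1+\xi}$ and integrating by parts in $t$ produces exactly $\frac{1}{1+\xi}\bigl[\int_{B_r}v^{1+\xi}\phi^p\,dx\bigr]_{t_1}^{t_2}$ together with $-\frac{1}{1+\xi}\int\!\!\int v^{1+\xi}\partial_t\phi^p\,dxdt$; moving the latter to the right-hand side and replacing $\partial_t\phi^p$ by its positive part yields the final term of the lemma. For the nonlocal form I split the $\R^n\times\R^n$ integral into the diagonal block $B_r\times B_r$ and the two symmetric off-diagonal blocks $B_r\times(\R^n\setminus B_r)$.

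On $B_r\times B_r$ the key input is the pointwise algebraic inequality (analogous to the elliptic version in \cite{DKPpmin}): for $a\ge b\ge d>0$,
\[
(a-b)^{p-1}\bigl(a^\xi\phi(x)^p - b^\xi\phi(y)^p\bigr)\ge c_1\bigl|a^{(p-1+\xi)/p}\phi(x) - b^{(p-1+\xi)/p}\phi(y)\bigr|^p - c_2\max(a,b)^{p-1+\xi}|\phi(x)-\phi(y)|^p.
\]
Applying this with $a=v(x,t)$, $b=v(y,t)$ (using $\delta u = \delta v$) and combining with \eqref{ellipticity} generates the leading $|w\phi(x)-w\phi(y)|^p\,d\mu$ term and the first error term on the right-hand side. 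On $B_r\times(\R^n\setminus B_r)$ the factor $\phi(y)$ vanishes, so the integrand reduces to $K(x,y,t)|\delta u|^{p-2}\delta u\cdot v(x)^\xi\phi(x)^p$. The contribution from $\{u(x)\ge u(y)\}$ has the correct sign and is discarded; on $\{u(y)>u(x)\}$ I estimate $(u(y)-u(x))^{p-1}\le C\bigl(v(x)^{p-1}+u(y)_+^{p-1}\bigr)$, which produces $v(x)^{p-1+\xi}\phi(x)^p = w(x)^p\phi(x)^p$ (yielding the second error term after pulling out $\sup_x\int_{\R^n\setminus B_r}|x-y|^{-(n+sp)}\,dy$) and $u(y)_+^{p-1}v(x)^\xi\phi(x)^p$ (yielding the tail term after pulling out $\sup_x\int_{\R^n\setminus B_r}u(y)_+^{p-1}|x-y|^{-(n+sp)}\,dy$).

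The main obstacle is the pointwise algebraic inequality on the diagonal piece: it requires a careful case split depending on whether $a$ and $b$ are comparable and whether $\xi$ is large or small relative to $p$, and it is the technical heart of all energy estimates for the fractional $p$-Laplacian. A secondary technical point is justifying the passage $h\to 0$ in the nonlinear nonlocal form, which proceeds by dominated convergence using the $L^p((\tau_1,\tau_2);W^{s,p})$-regularity of $u$ together with Lipschitz control on $\phi$ to produce an integrable majorant.
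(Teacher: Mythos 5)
Your outline coincides with the paper's proof in all structural respects: the same test function (up to one issue below), the same mollification-in-time device with Lebesgue instants handling the boundary terms, the same three-way split into the diagonal block, the off-diagonal block, and the parabolic term, and the same treatment of each. The pointwise algebraic inequality you state for the diagonal block is exactly what the paper establishes (it proves it from scratch via the cutoff inequality $\phi^p(y)\le(1+c_p\ep)\phi^p(x)+(1+c_p\ep)\ep^{1-p}|\phi(x)-\phi(y)|^p$ with $\ep=\delta(v(x)-v(y))/v(x)$ and a case split $v(x)>2v(y)$ versus $v(y)<v(x)<2v(y)$, rather than citing it), and your handling of the off-diagonal block — discarding the good-signed set and bounding $(u(y)-u(x))_+^{p-1}$ by $C(v(x)^{p-1}+u(y)_+^{p-1})$ — reproduces the paper's two error terms.

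The one genuine gap is the admissibility of the test function. You test with $\eta=v^\xi\phi^p$ and justify this by ``$u\ge0$ and $d>0$,'' but that only gives a lower bound $v\ge d$; the problem is the \emph{upper} end. For $\xi>1$ the map $t\mapsto t^\xi$ is not Lipschitz on $[d,\infty)$, so for an unbounded subsolution $v^\xi$ need not belong to $W^{s,p}$ (nor $v^{1+\xi}$ to $L^1$ for the parabolic term), and the very first step of the argument is not defined. The paper circumvents this by testing with $v_m^\xi\phi^p$, $v_m=\min\{v,m\}$ (the mean value theorem then gives $|v_m^\xi(x)-v_m^\xi(y)|\le\xi m^{\xi-1}|v_m(x)-v_m(y)|$, so the test function is admissible), using the truncation monotonicity \eqref{trunk2} to pass from $\A v$ to $\A v_m$ on the diagonal block where $v(x)>v(y)$, and sending $m\to\infty$ by monotone convergence at the very end. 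You should incorporate this truncation; everything else in your sketch then goes through as in the paper.
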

 \begin{proof} 
 %We should write down the proofs which I think I have checked. In the case $\xi<0$, the case of supersolutions, should follow as in the elliptic case with $q=1-\xi>1$. In the case $\xi>0$, the case of sub solutions, the proof has to be modified at a few instances but should be fine.

Let 
\[
v = u+d,\quad v_m = \min\{v,m\},\quad m\ge d,
\]
and let $\phi$ be as in the statement of the theorem. Let $q = 1-\xi\le 0$. Then  $\eta = v_m^{1-q}\phi^p$ is an admissible test function. This is clear if $q=0$. If $q<0$, it is enough to note that, according to the mean value theorem, 
\[
|v_m^{1-q}(x,t)-v_m^{1-q}(y,t)| = (1-q)\alpha^{-q}|v_m(x,t)-v_m(y,t)|, 
\]
for some $v_m(y,t)<\alpha< v_m(x,t)$. For $\tau_1<t_1<t_2<\tau_2$, let $\theta_j(t)\in C_c^\infty(\tau_1,\tau_2)$ be a smooth approximation of $\chi_{(t_1,t_2)}$ as $j\to\infty$. We will test the equation \eqref{weak2} with the function
\begin{equation}\label{etah}
\eta_{j,h} = \left((v_m)_h^{1-q}\phi^p\theta_j\right)_h,
\end{equation}
where the subscript $h$ on the right hand side denotes mollification in the sense of \eqref{mollify}. 
Hence we obtain 
\begin{align}\label{Ih}
0&\ge \int_{\tau_1}^{\tau_2}\int_{B_r}\int_{B_r}\A v(x,y,t)(\eta_{j,h}(x,t)-\eta_{j,h}(y,t))dxdydt\\
&+2\int_{\tau_1}^{\tau_2}\int_{\R^n\setminus B_r}\int_{B_r}\A v(x,y,t)\eta_{j,h}(x,t)dxdydt\notag\\
&- \int_{\tau_1}^{\tau_2}\int_{B_r}v\frac{\partial\eta_{j,h}}{\partial t}dxdt = I_1^{j,h}+ I_2^{j,h} + I_3^{j,h}.\notag 
\end{align}
For $I_3^{j,h}$ we have 
\begin{align}\label{I3}
I_3^{j,h} &= -\int_{\tau_1}^{\tau_2}\int_{B_r} (v_m)_h\partial_t((v_m)_h^{1-q}\phi^p\theta_j)dxdt\\
&= \int_{\tau_1}^{\tau_2}\int_{B_r}\partial_t (v_m)_h(v_m)_h^{1-q}\phi^p\theta_jdxdt\notag\\
&\to \int_{t_1}^{t_2}\int_{B_r}\partial_t (v_m)_h(v_m)_h^{1-q}\phi^pdxdt = I_3^h,\notag
\end{align}
as $j\to\infty$. Then integration by parts yields 
\begin{align}
&I_3^h = \int_{t_1}^{t_2}\int_{B_r}\partial_t \frac{(v_m)_h^{2-q}}{2-q}\phi^pdxdt\\
&=\frac{1}{1+\xi}\int_{B_r}(v_m)_h^{1+\xi}(x,t)\phi^p(x,t)dx\bigg |_{t=t_1}^{t_2} -  \frac{1}{1+\xi}\int_{t_1}^{t_2}\int_{B_r}(v_m)_h^{1+\xi}\partial_t\phi^pdxdt\notag\\
&\to \frac{1}{1+\xi}\int_{B_r}v_m^{1+\xi}(x,t)\phi^p(x,t)dx\bigg |_{t=t_1}^{t_2} - \frac{1}{1+\xi}\int_{t_1}^{t_2}\int_{B_r}v_m^{1+\xi}\partial_t\phi^pdxdt,\notag
\end{align}
as $h\to0$.
Since $I_1^{j,h}$ and $I_2^{j,h}$ are finite, our taking $j\to\infty$ in these terms simply replaces $\tau_i$ by $t_i$. By standard properties of mollifiers, we may then pass to the limit $h\to0$ in \eqref{Ih} and obtain 
\begin{align}\label{subsolnoh}
0&\ge \int_{t_1}^{t_2}\int_{B_r}\int_{B_r}\A v(x,y,t)(\eta(x,t)-\eta(y,t))dxdydt\\
&+2\int_{t_1}^{t_2}\int_{\R^n\setminus B_r}\int_{B_r}\A v(x,y,t)\eta(x,t)dxdydt\notag\\
&+ \frac{1}{1+\xi}\int_{B_r}v_m^{1+\xi}(x,t)\phi^p(x,t)dx\bigg |_{t=t_1}^{t_2} - \frac{1}{1+\xi}\int_{t_1}^{t_2}\int_{B_r}v_m^{1+\xi}\partial_t\phi^pdxdt\notag\\ 
&= I_1+ I_2 + I_3.\notag 
\end{align}
We start by estimating the integrand of $I_1$ under the assumption that $v(x,t)>v(y,t)$. For such $x,y,t$ we may apply the truncation result \eqref{trunk2}, or rather its short proof, to $\A v(x,y,t)$, to find 
\begin{align}
\A v(x,y,t)(\eta(x,t)-\eta(y,t)) &\ge \A v_m(x,y,t)(\eta(x,t)-\eta(y,t))\\
&=\A v_m(x,y,t)(v_m\phi^p(x,t)-v_m\phi^p(y,t)).\notag
\end{align}
In order to simplify notation, we will write $v$ rather than $v_m$ in the estimation of $I_1$. 
We will make use of the inequality 
\begin{equation}
\phi^p(y,t) \le \phi^p(x,t) + c_p\ep\phi^p(x,t) + (1+c_p\ep)\ep^{1-p}|\phi(x,t)-\phi(y,t)|^p,
\end{equation} 
valid for any $\ep\in (0,1)$, see Lemma 3.1 in \cite{DKPpmin}. We let $\delta\in (0,1)$ be a parameter to be chosen and set 
\[
\ep = \delta\frac{v(x,t)-v(y,t)}{v(x,t)}. 
\]
Thus we obtain, 
%with 
%\[
%A(x,y,t) = K(x,y,t)|v(x,t)-v(y,t)|^{p-2}(v(x,t)-v(y,t)),
%\]
\begin{align}
&\A v(x,y,t)\left(\frac{\phi^p(x,t)}{v^{q-1}(x,t)}-\frac{\phi^p(y,t)}{v^{q-1}(y,t)}\right)\\
&\ge \A v(x,y,t)\left(\frac{\phi^p(x,t)}{v^{q-1}(x,t)} -\frac{\phi^p(x,t)}{v^{q-1}(y,t)}\left(1+c_p\delta\frac{v(x,t)-v(y,t)}{v(x,t)}\right)\right)\notag\\
&-\frac{\A v(x,y,t)}{v^{q-1}(y,t)}\left(\left(1+c_p\delta\frac{v(x,t)-v(y,t)}{v(x,t)}\right)\delta^{1-p}\frac{(v(x,t)-v(y,t))^{1-p}}{v^{1-p}(x,t)}\right)\notag\\
&\times|\phi(x,t)-\phi(y,t)|^p = D + E.\notag 
\end{align}
We first estimate $D$ and note that 
\begin{align}\label{eqDg}
D & = \A v(x,y,t)\frac{\phi^p(x,t)}{v^{q-1}(y,t)}\left(\frac{v^{q-1}(y,t)}{v^{q-1}(y,t)} - 1 -c_p\delta\frac{v(x,t)-v(y,t)}{v(x,t)}\right)\\
& = \A v(x,y,t)\frac{\phi^p(x,t)}{v^{q}(y,t)}(v(x,t)-v(y,t))\notag\\
&\times\left(\frac{v^q(y,t)}{v^{q-1}(x,t)(v(x,t)-v(y,t))} - \frac{v(y,t)}{v(x,t)-v(y,t)} - c_p\delta\frac{v(y,t)}{v(x,t)}\right)\notag\\
& =  K(x,y,t)\frac{\phi^p(x,t)}{v^{q}(y,t)}(v(x,t)-v(y,t))^p\left(\frac{\frac{v^{q-1}(y,t)}{v^{q-1}(x,t)}-1}{\frac{v(x,t)}{v(y,t)}-1} - c_p\delta\frac{v(y,t)}{v(x,t)}\right).\notag 
\end{align}
For $a>1$, let 
\[
g(a) = \frac{a^{1-q}-1}{a-1},
\]
so that for $v(x,t)>v(y,t)$, 
\[
g\left(\frac{v(x,t)}{v(y,t)}\right) = \frac{\frac{v^{q-1}(y,t)}{v^{q-1}(x,t)}-1}{\frac{v(x,t)}{v(y,t)}-1}. 
\]
Since $\xi\ge1$ we have $q<0$ and hence $g(a)\ge 1$. 
%If $a\ge 2$, 
%\begin{equation}\label{eqg}
%g(a) = (1-q)\frac{\int_{1}^a\alpha^{-q}d\alpha}{a-1} \ge (1-q)\frac{\int_{a/2}^a(a/2)^{-q}d\alpha}{a-1} = (1-q)2^{q-1}\frac{a^{1-q}}{a-1} = c_q\frac{a^{1-q}}{a-1}.
%\end{equation}
If $a\ge 2$, then 
\begin{equation}\label{eqg}
g(a) = \frac{a^{1-q}-1}{a-1} \ge \frac{a^{1-q}-a^{1-q}/2}{a-1} = \frac12\frac{a^{1-q}}{a-1}. 
\end{equation}
%-1\ge -a^{1-q}/2$, so $g(a)\ge \frac12\frac{a^{1-q}}{a-1}$
Thus, for $v(x,t)>2v(y,t)$, we may combine \eqref{eqDg} and \eqref{eqg} to obtain 
\begin{align}
&D  \ge K(x,y,t)\frac{\phi^p(x,t)}{v^{q}(y,t)}(v(x,t)-v(y,t))^p\left(\frac12\frac{\frac{v^{q-1}(y,t)}{v^{q-1}(x,t)}}{\frac{v(x,t)}{v(y,t)}-1}-c_p\delta\frac{v(y,t)}{v(x,t)}\right)\\
& =  K(x,y,t)\frac{\phi^p(x,t)}{v^{q}(y,t)}(v(x,t)-v(y,t))^p\left(\frac12\frac{\frac{v^{q}(y,t)}{v^{q-1}(x,t)}}{v(x,t)-v(y,t)}-c_p\delta\frac{v(y,t)}{v(x,t)}\right)\notag\\
& = K(x,y,t)\frac{(v(x,t)-v(y,t))^{p-1}}{v^{q-1}(x,t)}\phi^p(x,t)\notag\\
&\times\left(\frac12 - c_p\delta \frac{v(y,t)}{v(x,t)}\frac{v(x,t)-v(y,t)}{v^q(y,t)}v^{q-1}(x,t)\right).\notag
\end{align}
Recalling that $v(x,t)\ge 2v(y,t)$, $q<0$ and $v(y,t)>0$ since $y\in B_r$, we see that 
\[
-\frac{v(y,t)}{v(x,t)}\frac{v(x,t)-v(y,t)}{v^q(y,t)}v^{q-1}(x,t) \ge -\frac{1}{2}.  
\]
Thus
\begin{equation}
D \ge K(x,y,t)\frac{(v(x,t)-v(y,t))^{p-1}}{v^{q-1}(x,t)}\phi^p(x,t)\left(\frac12-\frac12c_p\delta\right). 
\end{equation}
At this point we observe that 
\begin{align}
&\frac{(v(x,t)-v(y,t))^{p-1}}{v^{q-1}(x,t)} \ge 2^{1-p}v^{p-q}(x,t) \ge 2^{1-p}(v^{\frac{p-q}{p}}(x,t) - v^{\frac{p-q}{p}}(y,t))^p.
\end{align}
Choosing 
\begin{equation}\label{eqdelta}
\delta = \frac{1}{4c_p}, 
\end{equation}
 we arrive at 
\begin{align}\label{xge2y}
D &\ge K(x,y,t)2^{-p-1}(v^{\frac{p-q}{p}}(x,t) - v^{\frac{p-q}{p}}(y,t))^p\phi^p(x,t)\\
& = K(x,y,t)2^{-p-1}(w(x,t)-w(y,t))^p\phi^p(x,t). \notag
\end{align}

We now consider the remaining case $v(y,t)<v(x,t)<2v(y,t)$. By \eqref{eqDg}, the fact that 
$g(a)\ge 1$ and the choice of $\delta$, we have 
\begin{equation}\label{xle2y2}
D  \ge \frac12K(x,y,t)\frac{\phi^p(x,t)}{v^q(x,t)}(v(x,t)-v(y,t))^p.
\end{equation}
We further estimate 
\begin{align}\label{xle2y}
(w(x,t)-w(y,t))^p &= \left(\frac{p}{p-q}\right)^p\left(\int_{v(y,t)}^{v(x,t)}\tau^{-q/p}d\tau\right)^p\\
&\le \left(\frac{p}{p-q}\right)^p \frac{((v(x,t)-v(y,t))^p}{v^q(y,t)}.\notag 
\end{align}
Combining \eqref{xge2y}, \eqref{xle2y2} and \eqref{xle2y}, we have shown that 
\begin{equation}\label{eqD}
D \ge 2^{-1-p}K(x,y,t)(w(x,t)-w(y,t))^p\phi^p(x,t). 
\end{equation}
For the estimate of $E$ we use the facts that 
\[
-v^{1-q}(y,t) \ge -v^{1-q}(x,t)\text{ and }(v(x,t)-v(y,t))/v(x,t)\le 1,
\]
to find that 
\begin{equation}\label{eqE}
E \ge -CK(x,y,t)w^p(x,t)|\phi(x,t)-\phi(y,t)|^p.
\end{equation}
%It can be shown that (see [])
%\[
%(w(x,t)-w(y,t))^p \le 2^q\left(\frac{p-q}{p}\right)^p\frac{(v(x,t)-v(y,t))^p}{v^q(x,t)}, 
%\]
%which gives us 
%\begin{equation}\label{xle2y}
%D+E \ge c(p)2^{-q}|q|^{-p}(w(x,t)-w(y,t))^p\phi^p(x,t)-Cw^p(x,t)|\phi(x,t)-\phi(y,t)|^p. 
%\end{equation}
Finally, combining \eqref{eqD} and \eqref{eqE}, we have shown that for $v(x,t)>v(y,t)$, 
\begin{equation}\label{DE}
D+E \ge CK(x,y,t)((w(x,t)-w(y,t))^p\phi^p(x,t)-w^p(x,t)|\phi(x,t)-\phi(y,t)|^p). 
\end{equation}
If $v(y,t)>v(x,t)$, the same estimate may be deduced by interchanging the roles of $x$ and $y$. If $v(x,t) = v(y,t)$ it is sufficient to note that $0 \ge E$. 
Using the fact that 
\begin{align*}
&|w(x,t)\phi(x,t)-w(y,t)\phi(y,t)| - c\max\{w^p(x,t),w^p(y,t)\}|-\phi(x,t)\phi(y,t)|^p\\
& \le c|w(x,t)-w(y,t)|^p\phi^p(x,t),  
\end{align*}
and recalling that we are actually dealing with $v_m$ rather than $v$, we have shown that 
\begin{align}\label{I1final}
I_1 & \ge c\int_{t_1}^{t_2}\int_{\R^n\setminus B_r}\int_{B_r}\frac{|w_m(x,t)\phi(x,t)-w_m(y,t)\phi(y,t)|}{|x-y|^{n+sp}}dxdydt\\
&-c\int_{t_1}^{t_2}\int_{\R^n\setminus B_r}\int_{B_r}\max\{w_m^p(x,t),w_m^p(y,t)\}\frac{|\phi(x,t)-\phi(y,t)|^p}{|x-y|^{n+sp}}dxdydt,\notag
\end{align}
where we have set $w_m = v_m^{p-1+\xi}$. 

We now turn to $I_2$, and first observe that 
\begin{align}\label{I2ygex}
I_2 & \ge 2\int_{t_1}^{t_2}\int_{\R^n\setminus B_r}\int_{B_r}\A v_m(x,y,t)\chi_{\{v(y,t)>v(x,t)\}}\eta(x,t)dxdydt.
\end{align} 
% For $a,b\in\R$, consider the inequality 
% \begin{equation}\label{abconvexity}
% \left(|b|^{p-2}b-|a|^{p-2}a\right)(b-a) \ge c|b-a|^p.
% \end{equation}
We will need the following inequality to estimate $I_2$:
If $0\le a<b$, then  
\begin{equation}\label{aleb}
|a|^{p-2}a-|b|^{p-2}b \le |a-b|^{p-2}(a-b). 
\end{equation}
To prove \eqref{aleb}, we make use of the fact that the $l^s$- norm of an element $(\alpha,\beta)$ of $\R^2$ is non-increasing in $s$:  
\[
|(\alpha,\beta)|_s\le |(\alpha,\beta)|_1,\quad s\ge 1. 
\]
If $\alpha,\beta>0$, this means that 
\begin{equation}\label{alphabeta}
\alpha^s+\beta^s\le (\alpha+\beta)^s,\quad s\ge 1. 
\end{equation}
Now \eqref{aleb} follows by taking $\alpha = a$, $\beta = b-a$ and $s=p-1$ in \eqref{alphabeta}. 
Using \eqref{aleb} in \eqref{I2ygex} gives
\begin{align}\label{I2final}
I_2 &\ge c \int_{t_1}^{t_2}\int_{\R^n\setminus B_r}\int_{B_r}\frac{|v(x,t)|^{p-2}v(x,t)}{|x-y|^{n+sp}}\chi_{\{v(y,t)>v(x,t)\}}\eta(x,t)dxdydt\\
&-c \int_{t_1}^{t_2}\int_{\R^n\setminus B_r}\int_{B_r}\frac{|v(y,t)|^{p-2}v(y,t)}{|x-y|^{n+sp}}\chi_{\{v(y,t)>v(x,t)\}}\eta(x,t)dxdydt\notag\\
& \ge - c\int_{t_1}^{t_2}\int_{\R^n\setminus B_r}\int_{B_r}\frac{v(y,t)_+^{p-1}}{|x-y|^{n+sp}}\eta(x,t)dxdydt\notag\\
& \ge -c2^{p-1}\int_{t_1}^{t_2}\int_{\R^n\setminus B_r}\int_{B_r}\frac{u(y,t)_+^{p-1}}{|x-y|^{n+sp}}\eta(x,t)dxdydt\notag\\
& -c2^{p-1}\int_{t_1}^{t_2}\int_{\R^n\setminus B_r}\int_{B_r}\frac{d^{p-1}}{|x-y|^{n+sp}}\eta(x,t)dxdydt\notag\\
& \ge -c\int_{t_1}^{t_2}\sup_{x\in\supp\psi}\int_{\R^n\setminus B_r}\frac{u(y,t)_+^{p-1}}{|x-y|^{n+sp}}dy\int_{B_r}v_m^{\xi}\phi^p(x,t)dxdt\notag\\
&-c\sup_{x\in\supp\psi}\int_{\R^n\setminus B_r}\frac{1}{|x-y|^{n+sp}}dy\int_{t_1}^{t_2}\int_{B_r}v_m^{p-1+\xi}\phi^p(x,t)dxdt\notag
% & \ge -c \int_{t_1}^{t_2}\sup_{x\in\supp\psi}\int_{\R^n\setminus B_r}\frac{u(y,t)_+^{p-1}}{|x-y|^{n+sp}}dy\int_{B_r}v_m^{\xi}\phi^p(x,t)dxdt\notag\\
% & -c\sup_{x\in\supp\psi}\int_{\R^n\setminus B_r}\frac{1}{|x-y|^{n+sp}}dtdy\int_{t_1}^{t_2}\int_{B_r}w_m^{p}\phi^p(x,t)dxdt,\notag
\end{align}
where we used the fact that $d\le v_m$ in $B_r\times(t_1,t_2)$.
%$w_m = v_m^{p-1+\xi}$. 

Recalling \eqref{subsolnoh} and collecting the estimates \eqref{I1final} and  \eqref{I2final} for $I_1$ and $I_2$ respectively, we arrive at 
\begin{align}
&C\int_{t_1}^{t_2}\int_{\R^n\setminus B_r}\int_{B_r}\frac{|w_m(x,t)\phi(x,t)-w_m(y,t)\phi(y,t)|}{|x-y|^{n+sp}}dxdydt\\
&+\frac{1}{1+\xi}\int_{B_r}v_m^{1+\xi}(x,t)\phi^p(x,t)dx\bigg |_{t=t_1}^{t_2}\notag\\
&\le C\int_{t_1}^{t_2}\int_{\R^n\setminus B_r}\int_{B_r}\max\{w_m^p(x,t),w_m^p(y,t)\}\frac{|\phi(x,t)-\phi(y,t)|^p}{|x-y|^{n+sp}}dxdydt\notag\\
&+C\sup_{x\in\supp\psi}\int_{\R^n\setminus B_r}\frac{1}{|x-y|^{n+sp}}dy\int_{t_1}^{t_2}\int_{B_r}v_m^{p-1+\xi}\phi^p(x,t)dxdt\notag\\
&+ C\int_{t_1}^{t_2}\sup_{x\in\supp\psi}\int_{\R^n\setminus B_r}\frac{u(y,t)_+^{p-1}}{|x-y|^{n+sp}}dy\int_{B_r}v_m^{\xi}\phi^p(x,t)dxdt\notag\\
& +\frac{1}{1+\xi}\int_{t_1}^{t_2}\int_{B_r}v_m^{1+\xi}\partial_t\phi^pdxdt.\notag
\end{align}
Passing to the limit $m\to\infty$, we obtain the conclusion of the lemma.

\end{proof}

\begin{lemma} \label{cor_cacc}
Let $p\in (1,\infty)$, $s\in (0,1)$. Let $\xi \ge 1$ and assume that $K$ satisfies the ellipticity condition \eqref{ellipticity}. Let $x_0\in\mathbb R^n$, $\tau_1<\tau_2$, $B_r:=B_r(x_0)$, and assume that $u $ is a non-negative sub-solution in $B_r\times (\tau_1, \tau_2)$. For $d>0$, let $v=u+d$, $w=v^{(p-1+\xi)/p}$. Then
	\begin{align*}
		&\int_{\tau_1}^{\tau_2} \int_{B_r} \int_{B_r}  {|w\phi(x,t)-w\phi(y,t)|^{p}}\, d\mu + \frac{1}{\xi+1} \sup_{\tau_1<t<\tau_2}\int_{B_r}
v(x,t)^{1+\xi} \phi^p(x,t) dx \\
&\leq C \int_{\tau_1}^{\tau_2} \int_{B_r} \int_{B_r} \max\{w(x,t), w(y,t)\}^p|\phi(x,t)-\phi(y,t)|^p\, d\bar\mu\notag\\
&+C\biggl (\sup_{x\in\supp\psi}\int_{\R^n\setminus B_r}|x-y|^{-(n+ps)}\, dy\biggr)\biggl (\int_{\tau_1}^{\tau_2}\int_{B_r}w^p(x,t)\phi^p(x,t)\, dx dt\biggr )\\
&+C\int_{\tau_1}^{\tau_2}\left(\sup_{x\in\supp\psi}\int_{\R^n\setminus B_r}\frac{u(y,t)_+^{p-1}}{|x-y|^{n+sp}}dy\int_{B_r}v^\xi\phi^p(x,t)dx\right)dt\\
		&+ \frac{1}{(1+\xi)}\int_{\tau_1}^{\tau_2} \int_{B_r}v^{1+\xi} \left (  \frac{\partial \phi^p}{\partial t} \right )_{+} dx dt,
	\end{align*}
	for all $\phi(x,t) = \psi(x) \zeta(t)$ with $\zeta \in C_0^{\infty}(\tau_1,\tau_2)$ and $\psi \in C_0^{\infty}(B_r)$.
	\end{lemma}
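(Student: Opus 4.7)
The plan is to derive Lemma \ref{cor_cacc} directly from Lemma \ref{lem_cacc} by exploiting the compact support in time of the test function $\phi(x,t)=\psi(x)\zeta(t)$. Since $\zeta\in C_0^\infty(\tau_1,\tau_2)$, there exist $\tau_1<\tau_1'<\tau_2'<\tau_2$ such that $\zeta\equiv 0$ outside $[\tau_1',\tau_2']$, so the boundary terms of Lemma \ref{lem_cacc} vanish whenever the Lebesgue instants $t_1$ and $t_2$ are taken close enough to $\tau_1$ and $\tau_2$ respectively. Crucially, every integrand on the right-hand side of Lemma \ref{lem_cacc} is nonnegative, so enlarging the time interval of integration from $(t_1,t_2)$ to $(\tau_1,\tau_2)$ only makes the right-hand side larger. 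Call this enlarged right-hand side $\mathrm{RHS}(\tau_1,\tau_2)$; it is precisely the right-hand side appearing in Lemma \ref{cor_cacc}.

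The first step is to bound the supremum term. Fix any Lebesgue instant $t^*\in(\tau_1',\tau_2')$ (a.e.\ $t$ is one by Lebesgue's differentiation theorem), and choose a Lebesgue instant $t_1\in(\tau_1,\tau_1')$, so that $\phi(\cdot,t_1)\equiv 0$. Applying Lemma \ref{lem_cacc} with this pair $(t_1,t^*)$, the lower boundary contribution vanishes, and after discarding the (nonnegative) first integral on the left we obtain
\[
\frac{1}{1+\xi}\int_{B_r} v^{1+\xi}(x,t^*)\phi^p(x,t^*)\,dx \le \mathrm{RHS}(\tau_1,\tau_2).
\]
Taking the supremum over such $t^*$ — which, by the density of Lebesgue instants in $(\tau_1,\tau_2)$ and the fact that the integrand on the right is independent of $t^*$, coincides with the essential supremum over $(\tau_1,\tau_2)$ — yields the sup-in-time term.

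The second step is to bound the double integral. Choose Lebesgue instants $t_1\in(\tau_1,\tau_1')$ and $t_2\in(\tau_2',\tau_2)$, so that $\phi$ vanishes at both endpoints. Applying Lemma \ref{lem_cacc} with this pair, both boundary terms drop out, and dropping the (nonnegative) remaining term on the left gives
\[
\int_{t_1}^{t_2}\!\!\int_{B_r}\!\!\int_{B_r} |w\phi(x,t)-w\phi(y,t)|^p\,d\mu \le \mathrm{RHS}(\tau_1,\tau_2).
\]
Since the integrand on the left is nonnegative and $\phi\equiv 0$ outside $[\tau_1',\tau_2']\subset(t_1,t_2)$, the left-hand side equals the corresponding integral over $(\tau_1,\tau_2)$.

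Finally, adding the estimates from the two steps (which at most doubles the constant $C$) produces the conclusion of Lemma \ref{cor_cacc}. There is no genuine obstacle here; the only care required is to verify that each term on the right of Lemma \ref{lem_cacc} is nonnegative so that interval enlargement is valid, and to note that the essential supremum in $t$ is captured by ranging $t^*$ over the a.e.-set of Lebesgue instants.
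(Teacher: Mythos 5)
Your proof is correct and follows essentially the same route as the paper: both arguments reduce Lemma \ref{cor_cacc} to Lemma \ref{lem_cacc} by choosing Lebesgue instants relative to the support of $\zeta$ so that the boundary terms vanish (for the double integral) or capture the essential supremum in time (for the $v^{1+\xi}$ term), using the nonnegativity of the right-hand side to enlarge the time interval. The only cosmetic difference is that the paper re-runs the proof of Lemma \ref{lem_cacc} without the cutoff $\theta_j$ to get the first left-hand term over the full interval, whereas you obtain it directly from the statement of Lemma \ref{lem_cacc} by taking $t_1,t_2$ straddling $\supp\zeta$ — a slightly cleaner, black-box use of the earlier lemma.
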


\begin{proof}
We proceed as in the proof of Lemma \ref{lem_cacc} but leave out $\theta_j$ from the test function, i.e.\ we use the test function
\[
\eta_{h} = \left((v_m)_h^{1-q}\phi^p\right)_h. 
\]
This leads to the desired estimate, save for the term 
\[
\frac{1}{\xi+1} \sup_{\tau_1<t<\tau_2}\int_{B_r}
v(x,t)^{1+\xi} \phi^p(x,t) dx
\]
on the left hand side. For any given $\ep>0$, we may choose $t_2 = t_2(\ep)\in (\tau_1,\tau_2)$ in Lemma \ref{lem_cacc} so that 
\begin{align*}
&\frac{1}{\xi+1} \sup_{\tau_1<t<\tau_2}\int_{B_r}
v(x,t)^{1+\xi} \phi^p(x,t) dx\\
&\le\frac{1}{\xi+1} \int_{B_r}
v(x,t_2)^{1+\xi} \phi^p(x,t) dx + \ep.
\end{align*}
Then, choosing $t_1\in (\tau_1,t_2)$ outside the support of $\zeta$
and letting $\ep\to0$, we obtain the conclusion of the lemma. 

\end{proof}

\subsection{Local boundedness of subsolutions}

Based upon the parabolic Sobolev inequality \eqref{parabSobolev} and Lemma \ref{cor_cacc} $(ii)$, we are able to prove a reverse H\"older inequality for subsolutions and do a Moser iteration to prove local boundedness. 
The following standard lemma, see e.g.\ \cite{HanLin} Lemma 4.3, is used in the proof.
\begin{lemma}\label{HL}
Suppose $f(s)$ is nonnegative and bounded in $[0,1]$. If for all $0\le \alpha<\beta\le 1$,
\[
f(\alpha) \le \frac12 f(\beta) + \frac{A}{(\alpha-\beta)^\gamma} + B,
\]
then 
\[
f(\alpha) \le c(\gamma)\left(\frac{A}{(\alpha-\beta)^\gamma} + B\right).
\]
\end{lemma}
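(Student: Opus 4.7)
\textbf{Plan for the proof of Lemma \ref{HL}.}

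The approach is the standard iteration trick for self-improving estimates. Fix $0 \le \alpha < \beta \le 1$ and a parameter $\lambda \in (0,1)$ to be chosen, and define inductively the sequence
\[
\tau_0 = \alpha, \qquad \tau_{i+1} - \tau_i = (1-\lambda)\lambda^i(\beta - \alpha),
\]
so that $\tau_i \nearrow \beta$ as $i \to \infty$ and $\tau_i \in [\alpha,\beta] \subset [0,1]$ for every $i$. Applying the hypothesis with the pair $(\tau_i, \tau_{i+1})$ in place of $(\alpha,\beta)$ gives
\[
f(\tau_i) \le \tfrac12 f(\tau_{i+1}) + \frac{A}{\bigl((1-\lambda)\lambda^i(\beta-\alpha)\bigr)^{\gamma}} + B.
\]

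Iterating this inequality $k$ times yields
\[
f(\tau_0) \le \frac{1}{2^k} f(\tau_k) + \frac{A}{(1-\lambda)^{\gamma}(\beta-\alpha)^{\gamma}} \sum_{i=0}^{k-1} \frac{1}{(2\lambda^{\gamma})^i} + B \sum_{i=0}^{k-1} \frac{1}{2^i}.
\]
Since $f$ is bounded on $[0,1]$, the first term tends to $0$ as $k \to \infty$. The remaining geometric sums converge provided we choose $\lambda$ so that $2\lambda^\gamma > 1$; for instance $\lambda = 2^{-1/(2\gamma)}$. With this choice the right-hand side becomes
\[
\frac{c(\gamma)\,A}{(\beta-\alpha)^{\gamma}} + 2B,
\]
which gives the asserted estimate (with an improved constant for the $B$ term, absorbed into $c(\gamma)$).

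The only subtle point in the argument is the selection of $\lambda$: one needs $2\lambda^{\gamma} > 1$ so the series $\sum (2\lambda^\gamma)^{-i}$ converges, while simultaneously keeping $\lambda < 1$ so that $\tau_i \nearrow \beta$; both are satisfied by any $\lambda \in (2^{-1/\gamma}, 1)$. The boundedness hypothesis on $f$ is used in exactly one place, namely to discard the remainder term $2^{-k} f(\tau_k)$ in the limit, so it cannot be dropped.
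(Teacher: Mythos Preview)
Your argument is correct and is precisely the standard iteration proof of this lemma. The paper does not actually prove Lemma~\ref{HL}; it simply cites \cite{HanLin}, Lemma~4.3, as a reference, and the proof given there is essentially the one you have written.
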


\begin{lemma}\label{Moser}

Let $0<r<R$ and let $Q = B_r\times(t_0-T_0,t_0)$. Suppose that $u$ is a nonnegative subsolution in $2Q$. 
% such that 
% \begin{equation}\label{LowerBoundSubsol}
% u\ge \left(\frac{r^{sp}}{T_0}\right)^{\frac{1}{p-2}}>0.
% \end{equation}
Let $v(x,t) = u(x,t) + d$, where 
\begin{equation}\label{eqd}
d = \tail_\infty(u_+;x_0,\sigma r,t_0-T_0,t_0) + \left(\frac{r^{sp}}{T_0}\right)^{\frac{1}{p-2}}. \notag
\end{equation}
Then for any $\sigma\in (0,1)$,   
\begin{equation}
\sup_{B(x_0,\sigma r)\times(t_0-\sigma^{sp}T_0,t_0)}v\le \left(\frac{T_0}{r^{sp}}\frac{C}{(1-\sigma)^{\alpha}}\fint_Qv^{p-2+\delta}dxdt\right)^{\frac{1}{\delta}}, 
\end{equation}
 where $\alpha = \frac{(n+sp)^2}{sp}$. 
\end{lemma}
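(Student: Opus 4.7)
The plan is a Moser iteration on shrinking intrinsic cylinders, combining the Caccioppoli estimate of Lemma~\ref{cor_cacc} with the parabolic Sobolev embedding of Lemma~\ref{parabSobolev}. The specific choice of $d$ is designed so that every ``error'' term on the right-hand side of Lemma~\ref{cor_cacc} is absorbable into the bulk term $\int w^p$. For a pair of nested concentric subcylinders $Q^{(1)}\subset Q^{(2)}\subset Q$ with space radii $\rho_1<\rho_2$ in $[\sigma r, r]$ and intrinsic time lengths $T^{(i)} = \rho_i^{sp} T_0 /r^{sp}$, and a product cutoff $\phi = \psi\zeta$ equal to $1$ on $Q^{(1)}$ with support in $Q^{(2)}$, the reductions are as follows. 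First, the tail integrand is controlled by $C d^{p-1}/((1-\sigma)^{n+sp} r^{sp})$ by the definition of $d$, and $v\ge d$ gives $d^{p-1} v^\xi\le v^{p-1+\xi}=w^p$. Second, the time-derivative contribution $\int v^{1+\xi}(\partial_t\phi^p)_+$ is handled via $v^{1+\xi}\le w^p/d^{p-2}$ together with $d^{p-2}\ge r^{sp}/T_0$, which combines with the intrinsic scaling to yield at most $C(\rho_2-\rho_1)^{-sp}\int w^p$. Third, the cutoff term and the $\sup_x\int_{\R^n\setminus B_r}|x-y|^{-(n+sp)}dy$ term are bounded by $C(\rho_2-\rho_1)^{-sp}\int w^p$ via standard nonlocal cutoff estimates. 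Lemma~\ref{cor_cacc} therefore reduces to
\[
\int_{t_0-T^{(1)}}^{t_0}[w(\cdot,t)]^p_{W^{s,p}(B_{\rho_1})}\,dt + \sup_t \int_{B_{\rho_1}} v^{1+\xi}\,dx \le \frac{C_\xi}{(\rho_2-\rho_1)^{sp}}\int_{Q^{(2)}} w^p\,dxdt.
\]

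Plugging this into Lemma~\ref{parabSobolev} applied to $w\phi\in L^p(t;W^{s,p}_0(B_{\rho_1}))$, with $\kappa\in(1,\kappa^*]$ chosen so that the interpolation exponent $q=p\kappa^*(\kappa-1)/(\kappa^*-1)$ satisfies $w^q=v^{1+\xi}$ (so that the Sobolev sup-term exactly matches the Caccioppoli sup-term), one obtains the one-step reverse H\"older inequality
\[
\Bigl(\fint_{Q^{(1)}} v^{\kappa(p-1+\xi)}\,dxdt\Bigr)^{1/\kappa}\le \frac{C_\xi\,(T_0/r^{sp})^\beta}{(\rho_2-\rho_1)^{sp}}\fint_{Q^{(2)}} v^{p-1+\xi}\,dxdt
\]
for an explicit power $\beta$ coming from the Sobolev interpolation. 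Iterating with the dyadic sequence $\rho_j = \sigma r + (1-\sigma)r\cdot 2^{-j}$ and exponents $q_j = q_0 \kappa^j$, starting at the smallest admissible value $q_0 = p$ (since $\xi\ge 1$ is required), and letting $j\to\infty$, the product of step constants converges by the geometric series $\sum_j j\kappa^{-j}$ to a factor of the form $C(1-\sigma)^{-\alpha}$ with $\alpha = (n+sp)^2/(sp)$, which governs the power of $(1-\sigma)^{-1}$ in the final bound.

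To reduce the integration exponent from $p$ down to $p-2+\delta$ as demanded by the statement, I would use the elementary interpolation $v^p\le(\sup_Q v)^{2-\delta}v^{p-2+\delta}$, feed it into the iterated bound, and absorb the resulting $\sup_Q v$-factor via Lemma~\ref{HL} applied to $f(\sigma)=\sup_{\sigma Q}v$ on a dyadic family of radii. This produces $(\sup_{\sigma Q}v)^{p-2+\delta}\le C(1-\sigma)^{-\alpha}\fint_Q v^{p-2+\delta}\,dxdt$ after the absorption; dividing by $(\sup v)^{p-2}\ge d^{p-2}\ge r^{sp}/T_0$, which follows from $\sup v\ge d$ and the definition of $d$, converts this to $(\sup v)^\delta\le C(1-\sigma)^{-\alpha}(T_0/r^{sp})\fint v^{p-2+\delta}\,dxdt$, i.e.\ the claimed inequality in its $1/\delta$-power form. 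The main obstacle I foresee is matching the parabolic Sobolev interpolation parameter $\kappa$ to the $\xi$-dependent sup-in-time term of Caccioppoli at each iteration step, and bookkeeping the $\xi$-dependent constants $C_\xi$ through infinitely many steps so that they combine cleanly into the explicit power $\alpha=(n+sp)^2/(sp)$.
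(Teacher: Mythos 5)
Your overall strategy coincides with the paper's: the Caccioppoli estimate of Lemma \ref{cor_cacc} combined with the parabolic Sobolev embedding, with the tail term absorbed because $d\ge\tail_\infty(u_+;x_0,\sigma r,t_0-T_0,t_0)$ and the time-derivative term absorbed because $d^{p-2}\ge r^{sp}/T_0$, then a Moser iteration over dyadic intrinsic cylinders whose step constants multiply out to $C(1-\sigma)^{-(n+sp)^2/(sp)}$. Your matching of the Sobolev interpolation exponent to the Caccioppoli sup-in-time term (choosing $\kappa$ so that $w^{p\kappa^*(\kappa-1)/(\kappa^*-1)}=v^{1+\xi}$) is exactly the paper's choice $\kappa=1+\frac{1+\xi}{G(p-1+\xi)}$; the $\xi$-dependence of $\kappa$ that worries you is handled because the resulting recursion $\xi_{j+1}+1=\gamma(\xi_j+1)$ involves the \emph{fixed} ratio $\gamma=1+1/G=(n+sp)/n$, so the constants telescope by the usual $\sum_j j\gamma^{-j}$ argument.

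The one step that would fail as written is in your final exponent-reduction paragraph. The iteration started at $q_0=p$ (i.e.\ $\xi_0=1$) yields the \emph{inhomogeneous} bound $\sup_{\sigma Q}v^{2}\le C(1-\sigma)^{-\alpha}\frac{T_0}{r^{sp}}\fint_Q v^{p}$: the factor $T_0/r^{sp}$ accumulates from the $(T_0/r^{sp})^{(\gamma-1)/\gamma}$ present at every reverse-H\"older step and cannot be discarded, and the exponents on the two sides necessarily differ by $p-2$. Your intermediate claim $(\sup_{\sigma Q}v)^{p-2+\delta}\le C(1-\sigma)^{-\alpha}\fint_Q v^{p-2+\delta}$ therefore does not follow; indeed $d^{p-2}\ge r^{sp}/T_0$ only gives a \emph{lower} bound on $T_0/r^{sp}$ in terms of powers of $\sup v$, which is the wrong direction for trading the factor $T_0/r^{sp}$ against $(\sup v)^{p-2}$, so the subsequent ``division by $(\sup v)^{p-2}$'' rests on a false premise. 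The repair is short and is what the paper does: insert $v^{p}\le(\sup_Q v)^{2-\delta}v^{p-2+\delta}$ into the inhomogeneous bound to get $\sup_{\sigma Q}v^{2}\le C(1-\sigma)^{-\alpha}\frac{T_0}{r^{sp}}(\sup_Q v)^{2-\delta}\fint_Q v^{p-2+\delta}$, apply Young's inequality with exponents $\frac{2}{2-\delta}$ and $\frac{2}{\delta}$ to split off $\frac12\sup_Q v^{2}$, and then invoke Lemma \ref{HL} on a dyadic family of radii; this yields $\sup_{\sigma Q}v^{2}\le C\bigl((1-\sigma)^{-\alpha}\frac{T_0}{r^{sp}}\fint_Q v^{p-2+\delta}\bigr)^{2/\delta}$, which is the claimed estimate. (The paper organizes the same computation by iterating with a general $\xi_0>1$ and only at the end setting $\xi_0=1+\delta$.)
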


\begin{proof}
Let $\sigma\in(0,1)$ and let $\theta = 1-\sigma\in (0,1)$. We set
	\begin{equation*}
		r_0 = r, \quad r_j = r-\theta r(1-2^{-j}), \quad \delta_j = 2^{-j}\theta r,\quad j=1,2,\ldots
	\end{equation*}
	and
	\begin{eqnarray*}
		U_j &=& B_j \times \Gamma_j = B(x_0,r_j) \times (t_0-(r_j/r)^{sp} T_0, t_0), \\
		U(\lambda) &=& B(\lambda) \times \Gamma(\lambda) = B(x_0,\lambda r) \times (t_0 - \lambda^{sp}T_0, t_0),\quad \lambda>0.
	\end{eqnarray*}
	We choose test functions $\psi_{j} \in C^\infty(B_j)$ and $\zeta_j \in C^{\infty}(\Gamma_j)$ satisfying 
	\begin{equation}\label{deltaspace}
	\psi_j\equiv 1\text{ in }B_{j+1},\;\text{dist}( \supp \psi_j,\R^n\setminus B_j) \ge \frac{\delta_j}{2},
	\end{equation}	
 such that for $\phi_j = \psi_j \zeta_j$ we have
	\begin{equation*}
		0 \leq \phi_j \leq 1, \quad \phi_j=1 \text{ in } U_{j+1},\quad \phi_j = 0 \text{ on } \partial_p U_j,
	\end{equation*}
	and
	\begin{equation} \label{eqphi}
		|\nabla\phi_j| \leq \frac{C}{\theta r}2^j = C\delta_j^{-1}, \quad \left| \frac{\partial \phi_j}{\partial t}\right| \leq \frac{r^{sp}}{T_0} \frac{C}{(\theta r)^{sp}}2^{spj} = \frac{r^{sp}}{T_0}C\delta_j^{-sp}.
	\end{equation}
Let 
\[
w = v^{\frac{p-1+\xi}{p}}\quad \text{and }\eta_j = \phi_j^{\frac{p-1+\xi}{\xi+1}}.
\]
Note that $\eta_j$ satisfies the same bounds \eqref{eqphi} as $\phi_j$ with $C = C(n)\frac{p-1+\xi}{\xi+1}$. 
By the Sobolev embedding theorem there holds,  
\begin{align}\label{sobolevUj}
&\int_{\Gamma_j}\fint_{B_j}|w\eta_j|^{\kappa p}dxdt\\
&\quad\le Cr_j^{sp-n}\int_{\Gamma_j}\int_{B_{j}}\int_{B_{j}}\frac{|w\eta_j(x,t)-w\eta_j(y,t)|^p}{|x-y|^{n+sp}}dxdydt	\notag\\
&\quad\times\left(\sup_{\Gamma_j}\fint_{B_j}|w\eta_j|^{pG(\kappa-1)}\right)^{\frac{1}{G}} = Cr_j^{sp-n}I_1\times \left(\frac{I_2}{|B_j|}\right)^{\frac{1}{G}},\notag 
\end{align}
where $G=\frac{\kappa^*}{\kappa^*-1}$. 
By Lemma \ref{cor_cacc}, 
\begin{align}\label{I1}
&I_1 + \frac{1}{1+\xi}\sup_{t\in\Gamma_j}\int_{B_j}v^{1+\xi}\eta_j(x,t)dx\\
&\le  C \int_{\Gamma_j}\int_{B_j} \int_{B_j} \max\{w(x,t), w(y,t)\}^p|\eta_j(x,t)-\eta_j(y,t)|^p\, d\bar\mu\notag\\
&+C\biggl (\sup_{x\in\supp\psi_j}\int_{\R^n\setminus B_j}|x-y|^{-(n+ps)}\, dy\biggr)\biggl (\int_{\Gamma_j}\int_{B_j}w^p(x,t)\eta_j^p(x,t)\, dx dt\biggr )\notag\\
%&+cd^{1-p}\biggl (\frac{(r_j/r)^{sp}T_0}{r_j^{sp}}\biggr)\tail(u_+;x_0,r_j,\Gamma_j)^{p-1}\biggl (\sup_{\Gamma_j}\int_{B_j}w^p(x,t)\phi_j^p(x,t)\, dx\biggr )\notag\\
& + C\sup_{x\in\supp\psi}\int_{t_1}^{t_2}\int_{\R^n\setminus B_r}\frac{u(y,t)_+^{p-1}}{|x-y|^{n+sp}}dy\int_{B_r}v^{\xi}\eta_j^p(x,t)dxdt\notag\\
		&+ C\int_{\Gamma_j} \int_{B_j}v^{1+\xi}\eta_j \left ( \frac{1}{1+\xi} \frac{\partial \eta_j^p}{\partial t} \right )_{+} dx dt
= I_{11} + I_{12}+I_{13} + I_{14}.\notag
\end{align}
For $I_{11}$ we have the estimate 
\[
I_{11} \le C\int_{\Gamma_j}\int_{B_j}  w^p(x,t)dxdt\sup_{x\in B_j}\int_{B_j}\frac{|\eta_j(x,t)-\eta_j(y,t)|^p}{|x-y|^{n+sp}}dy. 
\]
Using \eqref{eqphi}, we see that for any $x\in B_j$, 
\begin{align*}
&\int_{B_j}\frac{|\eta_j(x,t)-\eta_j(y,t)|^p}{|x-y|^{n+sp}}dy\\
&\le \int_{B_j\cap \{|x-y|\le \delta_j\}}\frac{|\eta_j(x,t)-\eta_j(y,t)|^p}{|x-y|^{n+sp}}dy + \int_{B_j\cap \{|x-y|> \delta_j\}}\frac{|\eta_j(x,t)-\eta_j(y,t)|^p}{|x-y|^{n+sp}}dy\\
&\le C\delta_j^{-p}\int_{B_j\cap \{|x-y|\le \delta_j\}}\frac{|x-y|^p}{|x-y|^{n+sp}}dy + \int_{B_j\cap \{|x-y|> \delta_j\}}\frac{2^p}{|x-y|^{n+sp}}dy\\
&\le C\delta_j^{-sp}. \end{align*}
Since 
\[
\delta_j^{-sp} = r_j^{-sp}\frac{r_j^{sp}}{\delta_j^{sp}}\le r_j^{-sp}2^{jsp}\left(\frac{r}{(1-\sigma)r}\right)^{sp} = r_j^{-sp}\frac{2^{jsp}}{\theta^{sp}},
\]
we get 
\begin{equation}\label{I11}
I_{11} \le Cr_j^{-sp}\frac{2^{jsp}}{\theta^{sp}}\int_{U_j}  w^p(x,t)dxdt. 
\end{equation}
The first factor of $I_{12}$ can be estimated by $C(\delta_j^{-sp}+r_j^{-sp})$, using \eqref{deltaspace} and polar coordinates. 
This gives us 
\begin{equation}\label{I12}
I_{12}\le C(\delta_j^{-sp}+r_j^{-sp})\int_{U_j}  w^p(x,t)dxdt \le Cr_j^{-sp}\frac{2^{jsp}}{\theta^{sp}}\int_{U_j}  w^p(x,t)dxdt.
\end{equation} 
% \begin{align}\label{I13split}
% &C\sup_{x\in\supp\psi_j}\int_{\Gamma_j}\int_{\R^n\setminus B_j}\frac{u(y,t)_+^{p-1}}{|x-y|^{n+sp}}dy\int_{B_j}v^{\xi}\eta_j^p(x,t)dxdt\\
% &=C\sup_{x\in\supp\psi_j}\int_{\Gamma_j}\int_{\R^n\setminus B_{2r}}\frac{u(y,t)_+^{p-1}}{|x-y|^{n+sp}}dy\int_{B_j}v^{\xi}\eta_j^p(x,t)dxdt\notag\\
% &+C\sup_{x\in\supp\psi_j}\int_{\Gamma_j}\int_{B_{2r}\setminus B_j}\frac{u(y,t)_+^{p-1}}{|x-y|^{n+sp}}dy\int_{B_j}v^{\xi}\eta_j^p(x,t)dxdt = J_1+J_2.\notag
% \end{align}
 We now turn to $I_{13}$ and first note that if $y\in\R^n\setminus B_{j}$ and $x\in \sup \psi_j$, then 
\begin{align*}
\frac{1}{|x-y|} & = \frac{1}{|x_0-y|}\frac{|x_0-y|}{|x-y|}\le \frac{1}{|x_0-y|}\frac{|x-x_0| + |x-y|}{|x-y|}\\
&\le \frac{1+2r_j/\delta_j}{|x_0-y|} \le \frac{C\theta^{-1} 2^j}{|x_0-y|}. 
\end{align*}
It follows that 
\begin{align}
I_{13} & \le \frac{C}{\theta^{n+sp}}2^{j(n+sp)}r_j^{-sp}\tail_\infty^{p-1}(x_0;r_j,\Gamma_j)\int_{U_j}v^\xi dxdt\\
& \le \frac{C}{\sigma^{sp}\theta^{n+sp}}2^{j(n+sp)}r_j^{-sp}\tail_\infty^{p-1}(x_0;\sigma r,\Gamma_0)\int_{U_j}v^\xi dxdt\notag\\
&\le \frac{C}{\sigma^{sp}\theta^{n+sp}}2^{j(n+sp)}r_j^{-sp}\frac{\tail_\infty^{p-1}(x_0;\sigma r,\Gamma_0)}{d^{p-1}}\int_{U_j}v^{\xi+p-1} dxdt\notag\\
&\le \frac{C}{\sigma^{sp}\theta^{n+sp}}2^{j(n+sp)}r_j^{-sp}\int_{U_j}w^{p} dxdt.\notag
\end{align}
% Thus, if we choose $c_{\tail} = 4C$,
% \begin{align}\label{I13J1}
% J_1 & \le d^{-1}C\ep^{-n-sp}\frac{T_0}{r^{sp}}\tail^{p-1}(u_+;x_0,2r,t_0-T,t_0)
% \sup_{\Gamma_j}\int_{B_j}dv^{\xi}\eta_j^p(x,t)dx\\
% &\le \frac{1}{4}\sup_{\Gamma_j}\int_{B_j}v^{\xi+1}\eta_j^p(x,t)dx.\notag
% \end{align}
% If $y\in B_{(1+\ep)r}\setminus B_{j}$ we have, recalling \eqref{deltaspace}, 
% $|x-y|\ge \delta_j/2$. 
% Thus
% \begin{align}\label{I13J2}
% J_2 & \le 2^{n+sp}\delta^{-j(n+sp)}\int_{\Gamma_j}\int_{B_{(1+\ep)r}\setminus B_j}u(y,t)_+^{p-1}dy\int_{B_j}v^{\xi}\eta_j^p(x,t)dxdt\\
% &\le 2^{n+sp}\delta_j^{-sp}\frac{2^{jn}}{(r\theta)^{jn}}\int_{\Gamma_j}\int_{B_{(1+\ep)r}\setminus B_j}u(y,t)_+^{p-1}dy\int_{B_j}v^{\xi}\eta_j^p(x,t)dxdt\notag\\
% & \le 2^{n+sp}\delta_j^{-sp}\frac{2^{jn}}{\theta^{jn}}d^{1-p}\sup_{\Gamma_j}\fint_{B_{(1+\ep)r}\setminus B_j}u(y,t)_+^{p-1}dy\int_{\Gamma_j}\int_{B_j}d^{p-1}v^{\xi}\eta_j^p(x,t)dxdt\notag\\
% & \le C\delta_j^{-sp}\frac{2^{jn}}{\theta^{jn}}\int_{U_j}w^{p}\eta_j^p(x,t)dxdt = 
% \le Cr_j^{-sp}\frac{2^{j(n+sp)}}{\theta^{j(n+sp)}}\int_{U_j}w^{p}\eta_j^p(x,t)dxdt.\notag
% \end{align}
% Using \eqref{I13J1} and \eqref{I13J2} in \eqref{I13split} we get 
% \begin{align}
% &I_{13}  \le \frac{1}{4}\sup_{\Gamma_j}\int_{B_j}v^{\xi+1}\eta_j^p(x,t)dx + Cr_j^{-sp}\frac{2^{j(n+sp)}}{\theta^{j(n+sp)}}\int_{U_j}w^{p}\eta_j^p(x,t)dxdt.
% \end{align} 

Finally, by applying \eqref{eqd} and \eqref{eqphi}, we obtain the following estimate for $I_{14}$:
\begin{align}\label{I14}
I_{14} & \le C\int_{\Gamma_j}\int_{B_j}v^{1+\xi}\frac{r^{sp}}{T_0}\delta_j^{-sp} \\
& \le C\int_{\Gamma_j}\int_{B_j}\frac{w^pT_0}{r^{sp}}\frac{r^{sp}}{T_0}\delta_j^{-sp}dxdt
\le C\delta_j^{-sp} \int_{\Gamma_j}\int_{B_j}w^pdxdt \notag \\
&\le Cr_j^{-sp}\frac{2^{jsp}}{\theta^{sp}}\int_{\Gamma_j}\int_{B_j}w^pdxdt.\notag
\end{align}
Putting together \eqref{I1} with \eqref{I11} - \eqref{I14} yields 
\begin{align}\label{I1final}
&I_1 + \frac{1}{1+\xi}\sup_{t\in\Gamma_j}\int_{B_j}v^{1+\xi}\eta_j(x,t)dx \\
&\le C\frac{2^{j(n+sp)}}{\sigma^{sp}\theta^{n+sp}}r_j^{-sp}\int_{\Gamma_j}\int_{B_j}w^pdxdt = r_j^{-sp}c_{j,\theta}\int_{U_j}w^pdxdt, \notag
\end{align}
where $c_{j,\theta}=C\frac{2^{j(n+sp)}}{(1-\theta)^{sp}\theta^{n+sp}}$.
% Hence 
% \begin{align}\label{I1estimate}
% I_1 & \le I_1 + \frac14\sup_{t\in\Gamma_j}\int_{B_j}v^{1+\xi}\eta_j(x,t)dx \\
% &\le c_{j,\theta}r_j^{-sp}\int_{U_j}w^pdxdt.\notag
% \end{align}
Let 
\begin{equation}\label{kappa}
\kappa = 1 + \frac{1+\xi}{G(p-1+\xi)}. 
\end{equation}
Note that $\kappa\in(1,\kappa^*)$. Then
\begin{align}\label{I2}
I_2 & =  \sup_{\Gamma_j}\int_{B_j}v^{1+\xi}\phi_j^pdx. 
\end{align}
In view of \eqref{I1final}, $\frac{I_2}{1+\xi}$ enjoys the same estimate as $I_1$ with $\phi_j$ in place of $\eta_j$, i.e.
\begin{equation}
I_2 \le (1+\xi)c_{j,\theta}r_j^{-sp}\int_{U_j}w^pdxdt.
\end{equation}
At this point we have shown, recalling \eqref{sobolevUj}, that 
\begin{align}\label{revhold1}
&\fint_{U_j}|w\phi_j|^{\kappa p}dxdt \\
&\le Cr_j^{sp}\left(r_j^{-sp}c_{j,\theta}\fint_{\Gamma_j}\fint_{B_j}w^pdxdt\right)\left(r_j^{-sp}c_{j,\theta}(1+\xi)\int_{\Gamma_j}\fint_{B_j}w^pdxdt\right)^{\frac{1}{G}}\notag\\
%&\iff \fint_{U_j}|w\phi_j|^{\kappa p}dxdt 
&\le Cr_j^{sp}|\Gamma_j|^{\frac{1}{G}}(1+\xi)^{\frac{1}{G}}\left(r_j^{-sp}c_{j,\theta}\fint_{U_j}w^pdxdt\right)^{1+\frac{1}{G}}.\notag
\end{align}
Let $\gamma = 1+1/G = (n+sp)/n$. Then
\begin{align}\label{revhold2}
&\fint_{U_{j+1}}|w|^{\kappa p}dxdt \le C\frac{|U_j|}{|U_{j+1}|}r_j^{sp}|\Gamma_j|^{\frac{1}{G}}(1+\xi)^{\frac{1}{G}}\left(r_j^{-sp}c_{j,\sigma}\fint_{U_j}w^pdxdt\right)^{\gamma}\\
& = C\frac{r_j^{n+sp}}{r_{j+1}^{n+sp}}r_j^{sp\gamma}(1+\xi)^{\frac{1}{G}}\left(\frac{T_0}{r^{sp}}\right)^{\frac{1}{G}}\left(r_j^{-sp}c_{j,\sigma}\fint_{U_j}w^pdxdt\right)^{\gamma}\notag \\
& = C\left(\left(\frac{T_0}{r^{sp}}\right)^{\frac{\gamma-1}{\gamma}}(1+\xi)^{\frac{\gamma-1}{\gamma}}c_{j,\sigma}\fint_{U_j}w^pdxdt\right)^{\gamma}.\notag 
\end{align}

Recalling the definitions of $w$, $\kappa$ and $G$ we may rewrite \eqref{revhold2} as 
\begin{align}
\label{revhold3}
&\fint_{U_{j+1}}|v|^{p-1+\frac{sp}{n} +\gamma\xi}dxdt \\
& \le C\left(\left(\frac{T_0}{r^{sp}}\right)^{\frac{\gamma-1}{\gamma}}(1+\xi)^{\frac{\gamma-1}{\gamma}}c_{j,\sigma}\fint_{U_j}|v|^{p-1+\xi}dxdt\right)^{\gamma}.\notag
\end{align}
We are now in a position to start a Moser iteration.
Fix $\xi_0 >1$ and set
\begin{align*}
&\xi_{j} = \gamma^j(\xi_0+1)-1,\quad j\ge 0,\\
&p_j = p-1+\xi_j,\quad j\ge 0. 
\end{align*} 
Then we have the inductive relations 
\begin{align}
&\xi_{j+1} = \gamma(\xi_j+1)-1,\\
&p-1+\frac{sp}{n} +\gamma\xi_j = p-1+\xi_{j+1} = p_{j+1}.
\end{align}
Hence, using $\xi = \xi_j$ in \eqref{revhold3} and estimating $(1+\xi_j)^{\frac{\gamma-1}{\gamma}}\le 2\xi_0^{\frac{\gamma-1}{\gamma}}\gamma^j$, we find that 
\begin{align}
\label{revhold4}
&\fint_{U_{j+1}}|v|^{p_{j+1}}dxdt \le C\left(\left(\frac{T_0}{r^{sp}}\right)^{\frac{\gamma-1}{\gamma}}\xi_0^{\frac{\gamma-1}{\gamma}}\gamma^jc_{j,\sigma}\fint_{U_j}|v|^{p_j}dxdt\right)^{\gamma},
\end{align}
for $j=0,1,\ldots$. 
By iterating \eqref{revhold4} $m$ times, starting at $p_m$ and taking  $\gamma^m$:th roots, we conclude the estimate
\begin{align}\label{miter}
& \left(\fint_{U_{m}}|v|^{p_{m}}dxdt\right)^{\frac{1}{\gamma^m}} \\
&\le \fint_{U(r)}|v|^{p_0}\prod_{j=1}^mC^{\gamma^{1-j}}c_{j,\theta}^{\gamma^{-j}}\left(\frac{T_0}{r^{sp}}\right)^{\frac{\gamma-1}{\gamma}\gamma^{-j}}\xi_0^{\frac{\gamma-1}{\gamma}\gamma^{-j}}(\gamma^j)^{\gamma^{-j}}. 
\end{align}
The limit as $m\to\infty$ of the product on the right hand side of \eqref{miter} may be estimated in a standard fashion by studying its logarithm. 
Thus we obtain 
\begin{equation}
\lim_{m\to\infty}\prod_{j=1}^mC^{\gamma^{1-j}}c_{j,\theta}^{\gamma^{-j}}\left(\frac{T_0}{r^{sp}}\right)^{\frac{\gamma-1}{\gamma}\gamma^{-j}} \le C\theta^{-\frac{(n+sp)^2}{sp}}\frac{T_0}{r^{sp}}\xi_0. 
\end{equation}
Since $\lim_{m\to\infty}\frac{p_m}{\gamma^m} = \xi_0+1$, taking $m\to\infty$ in \eqref{revhold4} results in 
\begin{align}
\sup_{U(\sigma)}v^{\xi_0+1}\le C\xi_0(1-\sigma)^{-\frac{(n+sp)^2}{sp}}\frac{T_0}{r^{sp}}\fint_{U(1)}v^{p-1+\xi_0}dxdt.
\end{align}
Additionally, 
\begin{align}
&\sup_{U(\sigma)}v^{\xi_0+1}\le \sup_{U(1)}v^2C\xi_0(1-\sigma)^{-\frac{(n+sp)^2}{sp}}\frac{T_0}{r^{sp}}\fint_{U(1)}v^{p-3+\xi_0}dxdt\\
&\le \frac12\sup_{U(1)}v^{\xi_0+1} + C\xi_0\left((1-\sigma)^{-\frac{(n+sp)^2}{sp}}\frac{T_0}{r^{sp}}\fint_{U(1)}v^{p-3+\xi_0}dxdt\right)^{\frac{\xi_0+1}{\xi_0-1}}, 
\end{align}
where we used Young's inequality with exponents $\frac{\xi_0+1}{2}$ and $\frac{\xi_0+1}{\xi_0-1}$. By Lemma \ref{HL} we have 
\begin{align}
&\sup_{U(\sigma r)}v^{\xi_0+1}\le C\left((1-\sigma)^{-\frac{(n+sp)^2}{sp}}\frac{T_0}{r^{sp}}\fint_{U(r)}v^{p-3+\xi_0}dxdt\right)^{\frac{\xi_0+1}{\xi_0-1}}.
\end{align}
Whence the result follows by choosing $\xi_0 = 1+\delta$.

% Set $w_j = v^{\frac{p-1+\xi_j}{p}}$. Then 
% \begin{equation}\label{iter1}
% w_j^{\kappa_jp} = w_{j+1}^p. 
% \end{equation}
% By applying \eqref{rev1}-\eqref{rev2} to $w_j$ with $\xi_j$ and $\kappa_j$ in place of $\xi$ and $\kappa$, we get 
% \begin{equation}
% \fint_{U_{j+1}}|w_j|^{\kappa_j p}dxdt \le C\left(c_j\fint_{U_j}w_j^pdxdt\right)^{\gamma}\label{iter2}, 
% \end{equation}
% where
% \[
% c_j = 3^{\xi_j}\sigma^{-n-sp}r^{(n+sp)\frac{\gamma-1}{\gamma}}\left(\frac{T_0}{r^{sp}}\right)^{\frac{\gamma-1}{\gamma}}\frac{2^{jsp}}{(S-s)^{sp}}.
% \]
% Using \eqref{iter1} and \eqref{iter2} $j$ times, we derive the estimate 
% \[
% \fint_{U_{j+1}}|w_j|^{\kappa_j p}dxdt  \le C^{\sum_{m=0}^j\gamma^j}\prod_{m=1}^jc_{j-m+1}^{\gamma^m}\left(c_0\fint_{U_0}w_0^pdxdt\right)^{\gamma^j}.
% \]
% The factor $3^{\xi_j}$ is a problem. 

\end{proof}

In the next lemma we extract information on $u$ from Lemma \ref{Moser}.
\begin{lemma}\label{sup_param}
Let $u$ and $Q$ be as in Lemma \ref{Moser}. 
% If 
% \begin{equation}\label{tail_condition}
% \tail(u_+;x_0,(1+\ep)r,t_0-T_0,t_0) \le C\left(\frac{r^{sp}}{T_0}\right)^{\frac{1}{p-2}},  
%\end{equation}
Then for any $\sigma\in(0,1)$,
\begin{align}
\sup_{\sigma Q}u &\le \frac{C}{(1-\sigma)^\alpha}\left(\left(\frac{r^{sp}}{T_0}\right)^{\frac{1}{p-2}}  + \frac{T_0}{r^{sp}}\tail_\infty^{p-1}(u_+;x_0,\sigma r,t_0-T_0,t_0)\right)\notag\\
&+\frac{C}{(1-\sigma)^\alpha}\frac{T_0}{r^{sp}}\fint_{Q}u^{p-1}(x,t)dxdt,\notag
\end{align}
where $\alpha = \frac{(n+sp)^2}{sp}$. 
\end{lemma}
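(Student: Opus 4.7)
The plan is to apply Lemma \ref{Moser} with the choice $\delta = 1$ and then translate the resulting bound on $v = u+d$ into a bound on $u$ itself by using $u \le v$ and splitting $v^{p-1}$ via the elementary inequality $(a+b)^{p-1} \le 2^{p-2}(a^{p-1}+b^{p-1})$, valid since $p \ge 2$.

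Specifically, taking $\delta = 1$ in Lemma \ref{Moser} yields
\[
\sup_{\sigma Q} v \le \frac{C}{(1-\sigma)^{\alpha}} \frac{T_0}{r^{sp}} \fint_Q v^{p-1}\, dx\, dt.
\]
Since $u \le v$, the left-hand side dominates $\sup_{\sigma Q} u$. On the right-hand side, the pointwise inequality $v^{p-1} \le 2^{p-2}(u^{p-1}+d^{p-1})$ together with
\[
d^{p-1} \le 2^{p-2}\left(\tail_\infty^{p-1}(u_+;x_0,\sigma r,t_0-T_0,t_0) + \left(\frac{r^{sp}}{T_0}\right)^{\frac{p-1}{p-2}}\right)
\]
splits the integral into three contributions. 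The $u^{p-1}$ piece yields the averaged term in the statement directly. The tail piece pulls out of the integral since it is constant in $(x,t)$, giving exactly $\frac{C}{(1-\sigma)^\alpha}\frac{T_0}{r^{sp}}\tail_\infty^{p-1}$.

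The only computation requiring care is the last contribution, where the prefactor $T_0/r^{sp}$ from the Moser estimate combines with $(r^{sp}/T_0)^{(p-1)/(p-2)}$ from $d^{p-1}$ to produce
\[
\frac{T_0}{r^{sp}} \left(\frac{r^{sp}}{T_0}\right)^{\frac{p-1}{p-2}} = \left(\frac{r^{sp}}{T_0}\right)^{\frac{p-1}{p-2}-1} = \left(\frac{r^{sp}}{T_0}\right)^{\frac{1}{p-2}},
\]
which is precisely the first term on the right-hand side of the claimed inequality. Assembling the three pieces and absorbing the constants $2^{p-2}$ into $C$ completes the proof.

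There is no real obstacle here; the work is essentially bookkeeping of exponents, and the step that requires $p > 2$ is the appearance of $(r^{sp}/T_0)^{1/(p-2)}$, which is consistent with the hypothesis of Theorem \ref{main_thm}.
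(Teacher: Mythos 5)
Your proposal is correct and follows exactly the paper's own argument: apply Lemma \ref{Moser} with $\delta=1$, bound $\sup u$ by $\sup v$, split $v^{p-1}\le C(u^{p-1}+d^{p-1})$, and observe that $\frac{T_0}{r^{sp}}\left(\frac{r^{sp}}{T_0}\right)^{\frac{p-1}{p-2}}=\left(\frac{r^{sp}}{T_0}\right)^{\frac{1}{p-2}}$. The paper compresses the bookkeeping into ``from which the claim easily follows,'' but the content is identical.
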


\begin{proof}
Choosing $\delta= 1$ in Lemma \ref{Moser}, we get
\begin{align}
&\sup_{\sigma Q}u\le \sup_{\sigma Q}v \le \frac{C}{(1-\sigma)^\alpha}\frac{T_0}{r^{sp}}\left(\frac{r^{sp}}{T_0}\right)^{\frac{p-1}{p-2}}\\
&+ \frac{C}{(1-\sigma)^\alpha}\frac{T_0}{r^{sp}}\left(\tail_\infty^{p-1}(u_+;x_0,\sigma r,t_0-T_0,t_0) + \fint_{Q}u^{p-1}(x,t)dxdt\right),\notag  
\end{align}
from which the claim easily follows.  
\end{proof}

%\begin{theorem}
%Let $Q = B_R\times(t_0-T_0,t_0)$. Suppose that $u$ is a nonnegative subsolution in $2Q$. 
%Then there exists $\alpha = \alpha(n,s,p)$ such that
%\begin{align}
%\sup_{\sigma Q}u &\le \frac{C}{(1-\sigma)^\alpha}\left(\left(\frac{r^{sp}}{T_0}\right)^{\frac{1}{p-2}}  + \frac{T_0}{r^{sp}}\tail_\infty^{p-1}(u_+;x_0,\sigma r,t_0-T_0,t_0)\right)\notag\\
%&+\frac{C}{(1-\sigma)^\alpha}\frac{T_0}{r^{sp}}\left(\sup_{t_0-T_0<t<t_0}\fint_{B_r}u(x,t)dx\right)^{p-1}.\notag
%\end{align}
%
%
%\end{theorem}

\begin{proof}[\bf Proof of Theorem \ref{main_thm}]
For any $\ep\in (0,1)$, let $r = \ep R$ and let $T_1 = \ep^{sp}T_0$, so that \[\ep Q = B_{r}\times(t_0-T_1,t_0).\] Let $\psi \in C^\infty(B_{r})$ and $\zeta \in C^{\infty}(t_0-T_0,t_0)$ satisfy 
	\begin{equation}\label{deltaspace}
	\psi\equiv 1\text{ in }B_{r},\quad\text{dist}( \supp \psi,\R^n\setminus B_R) \ge \frac{(R-r)}{2}=\frac{\ep R}{2}=:\delta/2,
	\end{equation}	
and be such that for $\phi = \psi \zeta$, we have
	\begin{equation*}
		0 \leq \phi \leq 1, \quad \phi=1 \text{ in } \ep Q,\quad \phi = 0 \text{ on } \partial_p Q,
	\end{equation*}
	and
	\begin{equation} \label{eqphi2}
		|\nabla\phi| \leq \frac{C}{\ep r} = C\delta^{-1}, \quad \left| \frac{\partial \phi}{\partial t}\right| \leq \frac{R^{sp}}{T_0} \frac{C}{(\ep r)^{sp}} = \frac{R^{sp}}{T_0}C\delta^{-sp}.
	\end{equation}
    Let 
    \[
    v = u + \left(\frac{R^{sp}}{T_0}\right)^{\frac{1}{p-2}} + \tail_\infty(u_+;x_0,\ep r,t_0-T_0,t_0). 
    \]
By the parabolic Sobolev embedding theorem \ref{parabSobolev}, with $f = v\phi$ and $\kappa = 1+s/n$, we have 
\begin{align}
\int_{t_0-T_1}^{t_1}\int_{B_r}&|v|^{p+\frac{sp}{n}}dxdt = \int_{\ep Q}|v|^{p+\frac{sp}{n}}dxdt\\
&\le CR^{sp}\int_{t_0-T_0}^{t_0}\int_{B_R}\int_{B_R}\frac{|v\phi(x,t)-v\phi(y,t)|^p}{|x-y|^{n+sp}}dxdydt\notag\\
&\quad\times\left(\sup_{t_0-T_0<t<t_0}\fint_{B_R}|v(x,t)|dx\right)^{\frac{sp}{n}}.\notag
\end{align}
The term 
\[
R^{sp}\int_{t_0-T_0}^{t_0}\int_{B_R}\int_{B_R}\frac{|v\phi(x,t)-v\phi(y,t)|^p}{|x-y|^{n+sp}}dxdydt
\]
may be estimated precisely the way we treated the term $I_1$ in Lemma \ref{Moser}, using the Cacciopollo inequality with $\xi=1$. We thus end up with 
\begin{align}
\int_{\ep Q}&|v|^{p+\frac{sp}{n}}dxdt\\
&\le \frac{C}{(1-\ep)^{n+sp}}\int_Q|v|^pdxdt\left(\sup_{t_0-T_0<t<t_0}\fint_{B_R}|v(x,t)|dx\right)^{\frac{sp}{n}}.\notag
\end{align}
Using H\"older's inequality followed by Young's inequality, we get
\begin{align}
&\int_{\ep Q}|v|^pdxdt\le \left(\int_{\ep Q}|v|^{p+\frac{sp}{n}}dxdt\right)^{\frac{n}{n+s}}\\
&\le \left(\frac{C}{(1-\ep)^{n+sp}}\int_Q|v|^pdxdt\left(\sup_{t_0-T_0<t<t_0}\fint_{B_R}|v(x,t)|dx\right)^{\frac{sp}{n}}\right)^{\frac{n}{n+s}}\notag\\
&\le \frac12\int_Q|v|^pdxdt + \frac{C}{(1-\ep)^{\frac{(n+sp)n}{s}}}\left(\sup_{t_0-T_0<t<t_0}\fint_{B_R}|v(x,t)|dx\right)^p.\notag
\end{align}
An application of Lemma \ref{HL} gives 
\begin{align}\label{anothereq}
&\int_{\ep Q}|v|^pdxdt\le \frac{C}{(1-\ep)^{\frac{(n+sp)n}{s}}}\left(\sup_{t_0-T_0<t<t_0}\fint_{B_R}|v(x,t)|dx\right)^p.
\end{align}
Now, Lemma \ref{Moser} with $\delta = 1$ and $\sigma =\ep$, in conjunction with H\"older's inequality and \eqref{anothereq}, gives,  
\begin{align}
& \sup_{\ep^2 Q}v \le \frac{T_1}{r^{sp}}\frac{C}{(1-\ep)^{\frac{(n+sp)^2}{sp}}}\fint_{\ep Q}|v|^{p-1}\le \frac{T_0}{R^{sp}}\frac{C}{(1-\ep)^{\frac{(n+sp)^2}{sp}}}\left(\fint_{\ep Q}|v|^{p}\right)^{\frac{p-1}{p}}\\
&\le \frac{C}{(1-\ep)^{\frac{(n+sp)^2}{sp}}}\frac{C}{(1-\ep)^{\frac{(n+sp)n}{s}}}\frac{T_0}{R^{sp}}\left(\sup_{t_0-T_0<t<t_0}\fint_{B_R}|v(x,t)|dx\right)^{p-1}. 
\end{align}
Letting $\sigma = \ep^2$, as well as estimating \[\frac{1}{1-\sqrt{\sigma}}=\frac{1+\sqrt{\sigma}}{1-\sigma}\le \frac{2}{1-\sigma}, \]
we obtain 
\[
\sup_{\sigma Q}v \le \frac{C}{(1-\sigma)^\alpha}\frac{T_0}{R^{sp}}\left(\sup_{t_0-T_0<t<t_0}\fint_{B_R}|v(x,t)|dx\right)^{p-1},
\]
with $\alpha = (n+sp)(n+sp+sn)/sp$. We complete the proof by substituting 
\[v = u+ \left(\frac{R^{sp}}{T_0}\right)^{\frac{1}{p-2}} + \tail_\infty(u_+;x_0,\sigma R,t_0-T_0,t_0).\] 
\end{proof}

Using Theorem \ref{main_thm} and Lemma \ref{trunksubsol} we are able to prove local boundedness of solutions to $\partial_t u+Lu=0$ without any assumption on the sign of $u$. 
\begin{theorem}
Let $u$ be a solution to $\partial_tu+Lu=0$ in $2Q$ where $Q = B_r(x_0)\times(t_0-T_0,t_0)$. 
Then 
\begin{align}\label{supu1}
\sup_{\sigma Q}|u| &\le \frac{C}{(1-\sigma)^\alpha}\left(\left(\frac{R^{sp}}{T_0}\right)^{\frac{1}{p-2}} + \frac{T_0}{R^{sp}}\tail^{p-1}_\infty(u;x_0,\sigma r,t_0-T_0,t_0)\right)\\
&+ \frac{C}{(1-\sigma)^\alpha}\frac{T_0}{R^{sp}}\left(\sup_{t_0-T_0<t<t_0}\fint_{B_R}|u(x,t)|dx \right)^{p-1},\notag
\end{align}
for any $\sigma\in (0,1)$. 
\end{theorem}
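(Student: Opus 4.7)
The plan is to reduce to Theorem \ref{main_thm} by decomposing $u$ into its positive and negative parts. First, I observe that the operator $L$ is odd: $L(-u) = -Lu$ by the definition in \eqref{eqPV}, since $|{-a}|^{p-2}(-a) = -|a|^{p-2}a$. Consequently, if $u$ is a solution to $\partial_t u + Lu = 0$ in $2Q$, then so is $-u$. In particular, both $u$ and $-u$ are subsolutions in $2Q$, and Lemma \ref{trunksubsol} yields that $u_+$ and $u_- := (-u)_+ = \max\{-u,0\}$ are nonnegative subsolutions in $2Q$.

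Next, I apply Theorem \ref{main_thm} to $u_+$ and $u_-$ separately on the cube $Q$ with radius $R$ and time-length $T_0$. This gives the two estimates
\begin{align*}
\sup_{\sigma Q} u_\pm &\le \frac{C}{(1-\sigma)^\alpha}\left(\left(\frac{R^{sp}}{T_0}\right)^{\frac{1}{p-2}} + \frac{T_0}{R^{sp}}\tail^{p-1}_\infty(u_\pm;x_0,\sigma r,t_0-T_0,t_0)\right)\\
&\quad + \frac{C}{(1-\sigma)^\alpha}\frac{T_0}{R^{sp}}\left(\sup_{t_0-T_0<t<t_0}\fint_{B_R}u_\pm(x,t)\,dx \right)^{p-1}.
\end{align*}
Since the supports of $u_+$ and $u_-$ are (essentially) disjoint, for each fixed $t$ one has $u_+^{p-1}+u_-^{p-1}=|u|^{p-1}$, so by the definition of $\tail_\infty$ (which depends only on $|v|^{p-1}$),
\[
\tail^{p-1}_\infty(u_+;\cdot)+\tail^{p-1}_\infty(u_-;\cdot)=\tail^{p-1}_\infty(u;\cdot),
\]
and similarly $\fint_{B_R}u_+\,dx+\fint_{B_R}u_-\,dx=\fint_{B_R}|u|\,dx$.

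Finally, pointwise $|u|=u_++u_-$, hence $\sup_{\sigma Q}|u|\le \sup_{\sigma Q}u_+ + \sup_{\sigma Q}u_-$. Summing the two estimates above and invoking the elementary inequality $a^{p-1}+b^{p-1}\le (a+b)^{p-1}$ (for $p\ge 2$, $a,b\ge 0$, up to multiplying by a harmless constant) for the averaged term, we obtain the claimed bound with the same exponent $\alpha$ as in Theorem \ref{main_thm}. The main obstacle is essentially bookkeeping of constants and verifying the odd symmetry of $L$; there is no new analysis beyond what is already contained in Theorem \ref{main_thm} and Lemma \ref{trunksubsol}.
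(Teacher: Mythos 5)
Your proof is correct and follows essentially the same route as the paper: note that $-u$ is also a solution, invoke Lemma \ref{trunksubsol} to conclude that $u_+$ and $u_-=(-u)_+$ are nonnegative subsolutions, apply Theorem \ref{main_thm} to each, and add using $|u|=u_++u_-$. (The only tiny imprecision is that your claimed identity for the tails is really an inequality up to a factor of $2$, since the supremum in time of a sum need not equal the sum of the suprema; this is harmless.)
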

\begin{proof}
It is obvious that $-u$ is a solution whenever $u$ is. Thus by Lemma \ref{trunksubsol}, both $u_+$ and $u_-=(-u)_+$ are non negative subsolutions that Theorem \ref{main_thm} is applicable to. The result follows since $|u| = u_++u_-$. 
\end{proof}

\subsection*{Estimation of $\tail_\infty(u;x_0,r,t_0-T_0,t_0)$}
We end with a few remarks on the quantity $\tail_\infty(u;x_0,r,t_0-T_0,t_0)$. If $u$ solves 
\[\partial_tu+Lu=0\text{ in }\Omega\times(t_0-T_0,t_0)\] 
and $\overline{B_r(x_0)}\subset\Omega$, then $\tail(u;x_0,r,t_0-T_0,t_0)$ is bounded since 
\[u\in L^p(t_0-T_0,t_0;W^{s,p}(\R^n)).\]  
On the other hand, if $u$ solves \eqref{eqdata}$, \tail_\infty(u;x_0,r,t_0-T_0,t_0)$ is bounded if and only if 
\begin{equation}\label{gest}
\sup_{t_0-T_0<t<t_0}\int_{\R^n}\frac{|g|^{p-1}(x,t)}{1+|x-x_0|^{n+sp}}dx<\infty
\end{equation}
and 
\begin{equation}\label{uest}\sup_{t_0-T_0<t<t_0}\int_\Omega|u(x,t)|^qdx<\infty,\end{equation}
for some $q\ge p-1$. 
While \eqref{gest} is an assumption on the data, we can only verify \eqref{uest} in a few cases. If $B_r(z)$ is a ball such that $B_{2r}(z)\subset\Omega$, we can prove $\sup_{t_0-T_0<t<t_0}\int_{B_r(z)}|u(x,t)|^pdx<\infty$ using the Cacciopollo inequality. In Lemma \ref{cor_cacc}, we choose $\xi=p-1$ and 
\[
d = \left(\frac{r^{sp}}{T_0}\right)^{\frac{1}{p-2}} + \frac{MT_0}{r^{sp}}\tail^{p-1}(u;z,r,t_1, t_2), 
\]
for $t_1<t_0-T_0<t_0<t_2$.
This allows us to estimate the third term on the right hand side in Lemma \ref{cor_cacc} as 
\begin{align*}
&C\int_{t_1}^{t_2}\left(\sup_{x\in\supp\psi}\int_{\R^n\setminus B_r}\frac{u(y,t)_+^{p-1}}{|x-y|^{n+sp}}dy\int_{B_r}v^{p-1}\phi^p(x,t)dx\right)dt\\
&\le \frac{C}{M}\sup_{t_1<t<t_2}d\int_{B_r}v^{p-1}dx\le \frac{C}{M}\sup_{t_1<t<t_2}\int_{B_r}v^{p}\phi^p(x,t)dx, 
\end{align*}
for appropriate choice of $\psi$. The other terms are naturally bounded. For sufficiently large $M$, we can move this term to the left in the Cacciopollo inequality and obtain 
\begin{equation}\label{localtailsup}
\frac{1}{2p}\sup_{t_0-T_0<t<t_0}\int_{B_{r/2}}v^{p}\phi^p(x,t)dx<\infty.
\end{equation}
To estimate $\sup_{t_0-T_0<t<t_0}\int_{B_r(z)}|u(x,t)|^pdx$ when $B_r(z)$ intersects the boundary $\partial\Omega$, we need to assume more. 
Suppose $|g|\le C_0$ in $B_R\times(t_0-T_0,t_0)$, for some ball $B_R\supset\overline{\Omega}$.  Then it is easy to check that the functions $v_1=(u-C_0)_+$ and $v_2=(u+C_0)_-$ are subsolutions in $B_R\times(t_0-T_0,t_0)$. This allows us to use the Cacciopollo inequality in $B_R$ and obtain \eqref{localtailsup} for any $B_r\subset B_R$, for $v_1$ and $v_2$. This proves \eqref{uest}.

%\begin{remark}\label{tailsize}
%From the fact that 
%\[
%\tail(u_+;Q) = \left(\frac{R^{sp}}{T_0}\int_{t_0-T_0,t_0}\int_{\R^n\setminus B_R}u_+^{p-1}dxdt\right)^{\frac{1}{p-1}}
%\]
%is finite, we get that for a.e.\ $t_0$, 
%\[
%\lim_{T_0\to0}\left(\frac{R^{sp}}{T_0}\int_{t_0-T_0,t_0}\int_{\R^n\setminus B_R}u_+^{p-1}dxdt\right)^{\frac{1}{p-1}}<\infty.
%\]
%For such $t_0$ we consequently obtain 
%\[
%\tail(u_+;Q) \le C\left(\frac{R^{sp}}{T_0}\right)^{\frac{1}{p-2}}, 
%\]
%provided $T_0$ is small enough. If, on the other hand, 
%\[
%\left(\frac{R^{sp}}{T_0}\right)^{\frac{1}{p-2}} \le \tail(u_+;Q),
%\]
%then 
%\begin{align*}
%&\tail^{p-1}(u_+;Q) = \frac{R^{sp}}{T_0}\int_{t_0-T_0}^{t_0}\int_{\R^n\setminus B_R}u_+^{p-1}dxdt\\
%&\le 
%\tail^{p-2}(u_+;Q)\int_{t_0-T_0}^{t_0}\int_{\R^n\setminus B_R}u_+^{p-1}dxdt. 
%\end{align*}
%This leads to the inequality 
%\[
%\left(\frac{R^{sp}}{T_0}\right)^{\frac{1}{p-2}}\le \int_{t_0-T_0}^{t_0}\int_{\R^n\setminus B_R}u_+^{p-1}dxdt, 
%\]
%which certainly holds for large enough $T_0$. 
%In conclusion, 
%\begin{align}
%&\text{for any }R \text{ and a.e.\ } t_0, \text{ there exists }T_0\text{ such that }
%\tail(u_+;Q) = \left(\frac{R^{sp}}{T_0}\right)^{\frac{1}{p-2}}.
%\end{align}
%
%\end{remark}

%\begin{thebibliography}{10}
\bibliography{Boundedness}{}
\bibliographystyle{plain}
%\end{thebibliography}

\end{document}